\numberwithin{equation}{section}
\newtheorem{theorem}{Theorem}[section]
\newtheorem{thm}{Theorem}[section]
\numberwithin{theorem}{section}
\newtheorem{rem}[theorem]{Remark}
\newtheorem{defn}[theorem]{Definition}
\newenvironment{proof}{\noindent\\ \noindent\relax{\sc
     Proof}}{{\samepage\par\nopagebreak\hbox
     to\hsize{\hfill$\Box$}}}
\newcommand{\be}{\begin{equation}} \newcommand{\ee}{\end{equation}}
\newcommand{\bd}{\begin{displaymath}} \newcommand{\ed}{\end{displaymath}}
\newcommand{\ben}{\begin{enumerate}} \newcommand{\een}{\end{enumerate}}
\newcommand{\bi}{\begin{itemize}} \newcommand{\ei}{\end{itemize}}
\newcommand{\ud}{\mathrm{d}}
\newcommand{\Expectation}[1]{\operatorname{E}\left[ #1 \right]}
\newcommand{\V}[1]{\mathbb{V}\left( #1 \right)}
\newcommand{\C}[1]{\mathbb{C}\left( #1 \right)}
\newcommand{\Et}[1]{\Expectation{ #1 \vert \mathcal{Y}_{n}}}
\newcommand{\Vart}[1]{\V{ #1 \vert  \mathcal{Y}_{n}}}
\newcommand{\covx}[2]{\C{ #1,#2}}
\newcounter{rEPP}
\newcounter{rEexpPsi}
\newcounter{rEtauP}
\begin{document}


\title{Normal approximation for mixtures of normal distributions and the evolution of phenotypic traits}

\date{}


\author{{\sc Krzysztof Bartoszek}\thanks{{krzysztof.bartoszek@liu.se, krzbar@protonmail.ch, 
Department of Computer and Information Science, Link\"oping University, 581 83 Link\"oping, Sweden}}
~and {\sc Torkel Erhardsson}
\thanks{{
torkel.erhardsson@liu.se, 
Department of Mathematics, 
Link\"oping University, 581 83 Link\"oping, Sweden}}}

\maketitle

\begin{abstract}
Explicit bounds are given for the Kolmogorov and Wasserstein distances between a mixture of normal distributions, by which we mean that the conditional distribution given some $\sigma$-algebra is normal, and a normal 
distribution with properly chosen parameter values. The bounds depend only on the first two moments of the first two conditional moments given the $\sigma$-algebra. The proof is based on Stein's method. As an application, we consider the Yule-Ornstein-Uhlenbeck model, used in the field
of phylogenetic comparative methods. We obtain bounds in either distance between the distribution of the average value of a phenotypic trait 
over \emph{n} related species, and a normal distribution. The bounds imply and extend earlier limit theorems by Bartoszek and Sagitov.
\end{abstract}

\noindent
Keywords : 
Mixture of normal distributions, Normal approximation,
Kolmogorov distance, Stein's method, Phylogenetic tree, 
Phenotypic trait, Yule process, Ornstein--Uhlenbeck process, Jumps
\\~\\ \noindent
AMS subject classification : 
62E17, 60F05, 92D15

\section{Introduction}\label{sec:introduction}
In this paper we derive upper bounds for the Kolmogorov and Wasserstein distances between a mixture of normal distributions and a normal distribution with properly chosen parameter values. Here, a random variable $X$ is said to have a mixture of normal distributions if there exists a $\sigma$-algebra $\mathscr{G}$ such that the conditional distribution of $X$ given $\mathscr{G}$ is normal. Also, for comparison and completeness, lower bounds for both distances are derived.

To see why this is of interest, suppose that a random sequence $\{X_n;n=0,1,\ldots\}$ converges in distribution to a normal random variable $Z$. If $\mathscr{L}(Z)$ is used instead of $\mathscr{L}(X_n)$ for the (approximate) computation of the expectation $\mathbb{E}(h(X_n))$, where $h:\mathbb{R}\to\mathbb{R}$ is a measurable function, an approximation error $\mathbb{E}(h(X_n))-\mathbb{E}(h(Z))$ is incurred, about which the limit theorem \emph{per se} gives no information. In order to control this error, it is natural to use a metric on the space of probability measures on $(\mathbb{R},\mathscr{R})$, and try to bound the distance between $\mathscr{L}(X_n)$ and $\mathscr{L}(Z)$. A common choice is the Kolmogorov distance, which is defined for any two random variables $X$ and $Z$ with probability distributions $\mu_1$ and $\mu_2$ by
\[d_K\bigl(\mu_1,\mu_2\bigr) = \sup_{x\in\mathbb{R}}\bigl|\mathbb{P}(X\leq x)-\mathbb{P}(Z\leq x)\bigr|.\]
Another possibility is the Wasserstein distance, defined by
\[d_W\bigl(\mu_1,\mu_2\bigr) = \sup_{h\in\mathcal{H}_1}\bigl|\mathbb{E}(h(X))-\mathbb{E}(h(Z))\bigr|,\]
where $\mathcal{H}_1$ is the class of Lipschitz functions with Lipschitz constant bounded by 1.

In Section~\ref{sec:normalapproxnormalmixture}, we derive bounds in both distances between the probability distribution of a random variable $X$, which has a mixture of normal distributions, and a normally distributed random variable (Theorems~\ref{T:Kolmogorovboundmixednormal} and \ref{T:Wassersteinboundmixednormal}). The bounds depend only on the first two moments of the first two conditional moments given the ``mixing'' $\sigma$-algebra. The main tool used is Stein's method, a powerful technique introduced in Stein \cite{Stein1972}. At the core of this method is a functional equation called the Stein equation:
\[f'(x) - xf(x) = I_{(-\infty,z]}(x) - \Phi(z) \qquad\forall x\in\mathbb{R},\]
where $\Phi$ is the cumulative distribution function of the $\textnormal{N}(0,1)$ distribution. By taking expectations with respect to $\mathscr{L}(X)$ on both sides, and using analytical properties of the solution function $f$, bounds can be obtained for the Kolmogorov distance between $\mathscr{L}(X)$ and $\textnormal{N}(0,1)$. While this is easiest if $X$ is a sum of locally dependent random variables, the use of couplings and other special devices has made it possible to handle many other situations. There are also extensions of the method which allow for other approximating distributions to be used, such as Poisson and compound Poisson distributions and multivariate normal distributions. Since its introduction, the number of applications of the method has grown very large. For more details and many examples, see Barbour and Chen~\cite{BarbourChen2005a}, \cite{BarbourChen2005b}, and the references therein. 

In the second part of the paper we apply the obtained results to branching Ornstein-Uhlenbeck processes. A one-dimensional Ornstein-Uhlenbeck (OU)
process is a stochastic process that follows a linear stochastic differential equation of the form
\be\label{E:OUsde}
\ud X(t) = -\alpha X(t) \ud t + \sigma_{a} \ud W(t) \qquad\forall t\geq 0,
\ee
where $\alpha,\sigma_a >0$, and $\{W(t);t\geq 0\}$ is a standard Wiener process. In the subfield of evolutionary biology called phylogenetic comparative methods, processes like \eqref{E:OUsde} 
are used for modelling the evolution of
phenotypic traits, such as body size, at the between-species level, in the following way: an Ornstein-Uhlenbeck process evolves on top of a possibly random phylogenetic tree, 
by which we mean a (random) 
directed acyclic graph with weights on edges that correspond to edge length, and nodes corresponding to the branching 
events in the tree, see Fig.~\ref{F:TreeOU}. In the Yule--Ornstein-Uhlenbeck (YOU) model, which we consider here, each speciation (=branching) point is binary, and the edge lengths 
are independent exponentially distributed random variables. This so-called pure birth tree is stopped just before 
the $n$th speciation event,
i.e., it has $n$ leaves (= tips). Without loss of generality we fix the birth rate to 1. 
Varying the birth rate will only have the effect of rescaling time and will not add anything substantial to our results.

\begin{figure}
\begin{center}
\includegraphics[width=0.45\textwidth]{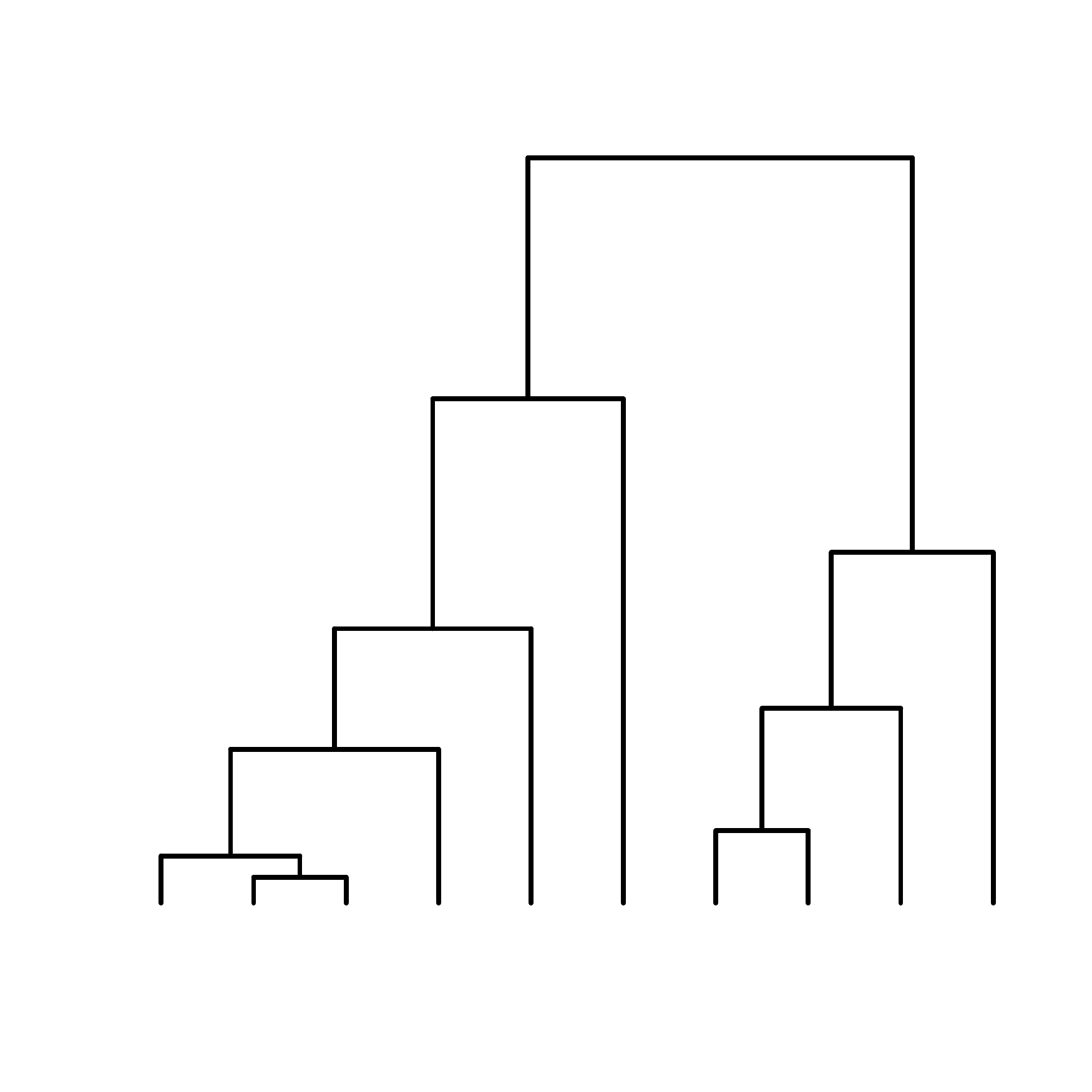}
\includegraphics[width=0.45\textwidth]{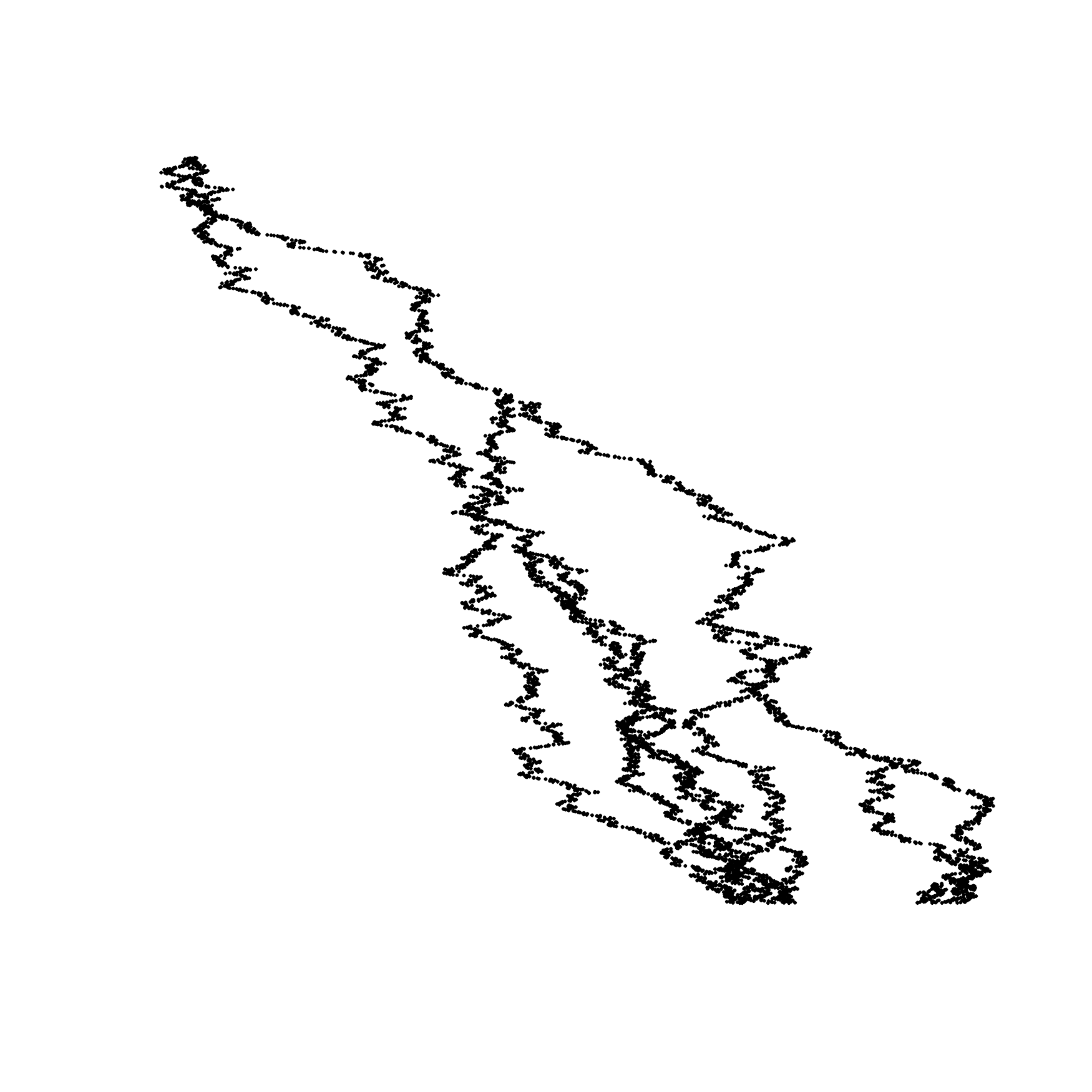}
\caption{Left: an example phylogenetic tree with $10$ leaves, simulated using the R \cite{R2017} package TreeSim \cite{Stadler2011}. 
Right: an OU process with parameters $\alpha=1,~\sigma_{a}=1/2,~X(0)=-3$ evolving on top of this tree, 
simulated using the R package mvSLOUCH \cite{BartoszekPienaarMostadAnderssonHansen2012}.}\label{F:TreeOU}
\end{center}
\end{figure}

In the YUO model, along each edge (= branch) the process describing the phenotypic trait behaves as defined by \eqref{E:OUsde}. Then, at a speciation
point the process splits into as many copies as there are descendant branches. At the start of each descendant branch the process starts with the value at
which the ancestral branch ended (the starting value is the same for all descendant branches). From that point onward, on each descendant lineage the processes
behave independently.

The YOU model can be further extended by allowing for jumps; see Bokma \cite{Bokma2002}.
A particular type of jumps that can serve as a starting point for mathematical analysis,
is when a jump takes place just after a speciation event, independently on each descendant lineage, with a probability $p$ that may be dependent on the speciation event; see Section~\ref{sec:applicationtoyoujumpsmodel} for more details.

In the context of evolutionary biology, the observed phenotypic data are the values of the process at the tips, 
$\{X_{i}\}_{i=1}^{n}$. Of particular interest are central
limit theorems for the sample average, $\overline{X}_n$, or more generally for functionals
of the observed data (see e.g.~Ren et al.~\cite{RenSongZhang2014}, Adamczak and Mi{\l }o\'s \cite{AdamczakMilos2015}, Bartoszek and Sagitov \cite{BartoszekSagitov2015}, An\'e et al.~\cite{AneHoRoch2017}, Bartoszek \cite{Bartoszek2020}, and a multitude of other works). If the drift of the OU process is fast enough, then 
one can show convergence in distribution for $\overline{X}_n$ to a normal limit. However, if the drift is slow, then the dependencies induced by common ancestry persist and statements about the limit are more 
involved. The above was shown for the YOU model in \cite{BartoszekSagitov2015}, while the YOU model with normally distributed jumps was considered in \cite{Bartoszek2020}. In the slow drift regime one can show
$L^{2}$ convergence (see e.g.~\cite{AdamczakMilos2015}, \cite{Bartoszek2020}, \cite{BartoszekSagitov2015}). However, so far there is no complete characteristic of the limit in this case.

In Sections~\ref{sec:applicationtoyoumodel} and~\ref{sec:applicationtoyoujumpsmodel} of the present paper, we extend the central limit theorems for $\overline{X}_n$ by giving bounds for the Kolmogorov and Wasserstein distances between the distribution of $\overline{X}_n$ and properly chosen normal distributions (Theorems~\ref{T:YOU}, \ref{T:YOUWasserstein}, \ref{T:YOUj} and \ref{T:YOUjWasserstein}), which converge weakly to the limiting normal distributions of \cite{BartoszekSagitov2015} and \cite{Bartoszek2020} as $n\to\infty$. The key observation is that conditional on the tree (and the locations of jumps), $\overline{X}_n$ is a linear combination of normally distributed random variables, which makes it possible to apply Theorems~\ref{T:Kolmogorovboundmixednormal} and \ref{T:Wassersteinboundmixednormal}. One needs to compute the first two moments of the conditional expectation and variance of $\overline{X}_n$, which requires a careful analysis of the random quantities involved, e.g., the heights in the tree and speciation
events along lineages, but a considerable part of this work was done in \cite{BartoszekSagitov2015} and \cite{Bartoszek2020} and can be re-used here.

Lastly, in the Appendix, for the sake of comparison and completeness, we state and prove lower bounds in either distance between the probability distributions of a random variable $X$, which has a mixture of normal distributions, and a normally distributed random variable. The proof is based on ideas in Barbour and Hall \cite{BarbourHall1984}. 

\section{Normal approximation for mixtures of normal distributions}\label{sec:normalapproxnormalmixture}
A metric $d(\cdot,\cdot)$ on the space of probability measures on a measurable space $(\Omega,\mathscr{F})$ is called an integral probability metric, see M\"uller~\cite{Muller1997}, if
\begin{equation} \label{E:integralprobabilitymetricdefinition}
d(\mu_1,\mu_2) = \sup_{h\in\mathcal{H}}\bigl|\int h(x)d\mu_1(x) - \int h(x)d\mu_2(x)\bigr|,
\end{equation}
where $\mathcal{H}$ is a class of measurable functions $h:\Omega\to\mathbb{R}$ called the generating class. Our interest is in two integral probability metrics on the space of probability measures on $(\mathbb{R},\mathscr{R})$: the Kolmogorov distance $d_K$, for which $\mathcal{H}$ is the set of indicator functions of half-lines, $\mathcal{H}_0 = \{I_{(-\infty,z]}(\cdot); z\in\mathbb{R}\}$, and the Wasserstein distance $d_W$, for which $\mathcal{H}$ is the set $\mathcal{H}_1$ of Lipschitz functions with Lipschitz constant bounded by 1. It is well-known that for sequences of probability measures on $(\mathbb{R},\mathscr{R})$, convergence in either distance implies the usual weak convergence; see Section 4 in \cite{Muller1997}.

Also, the Kolmogorov distance is scale (and location) invariant, in the sense that 
\begin{equation} \label{E:Kolmogorovscaleinvariance}
d_K\Bigl(\mathscr{L}(X),\mathscr{L}(Y)\Bigr) = d_K\Bigl(\mathscr{L}(\frac{X-\mu}{\sigma}),\mathscr{L}(\frac{Y-\mu}{\sigma})\Bigr) \qquad\forall \mu\in\mathbb{R},\sigma>0,
\end{equation}
for any pair of random variables $X$ and $Y$. This follows from \eqref{E:integralprobabilitymetricdefinition} and the fact that
\[\mathcal{H}_0 = \{I_{(-\infty,\sigma z+\mu]}(\cdot); z\in\mathbb{R}\} \qquad\forall \mu\in\mathbb{R},\sigma>0.\]
The Wasserstein distance is not scale invariant, but has the property
\begin{equation} \label{E:Wassersteinpseudoscaleinvariance}
d_W\Bigl(\mathscr{L}(X),\mathscr{L}(Y)\Bigr) = \sigma d_W\Bigl(\mathscr{L}(\frac{X-\mu}{\sigma}),\mathscr{L}(\frac{Y-\mu}{\sigma})\Bigr) \qquad\forall \mu\in\mathbb{R},\sigma>0,
\end{equation}
which follows from \eqref{E:integralprobabilitymetricdefinition} and the fact that for each $\mu\in\mathbb{R}$, $\sigma>0$, the mapping $\xi:\mathcal{H}_1\to\mathcal{H}_1$, defined by: $\xi h(x) = \sigma h(\frac{x - \mu}{\sigma})$, is a bijection.

Our main results are contained in Theorem~\ref{T:Kolmogorovboundmixednormal} (Kolmogorov distance) and Theorem~\ref{T:Wassersteinboundmixednormal} (Wasserstein distance). 

\begin{thm}\label{T:Kolmogorovboundmixednormal}
Let $X$ be a real valued random variable such that $\mathbb{E}(X^2)<\infty$, and let $\mathscr{G}$ be a $\sigma$-algebra such that the regular conditional distribution of $X$ given $\mathscr{G}$ is normal. Then,
\[d_K\Bigl(\mathscr{L}\bigl(\frac{X-\mathbb{E}(X)}{\sqrt{\mathbb{E}(\mathbb{V}(X|\mathscr{G}))}}\bigr),\textnormal{N}(0,1))\Bigr) = d_K\Bigl(\mathscr{L}(X),\textnormal{N}\bigl(\mathbb{E}(X),\mathbb{E}(\mathbb{V}(X|\mathscr{G}))\bigr)\Bigr)\]
\[\leq \frac{\sqrt{\mathbb{V}\bigl(\mathbb{V}(X|\mathscr{G})\bigr)}}{\mathbb{E}\bigl(\mathbb{V}(X|\mathscr{G})\bigr)} + \frac{\mathbb{V}\bigl(\mathbb{E}(X|\mathscr{G})\bigr)}{\mathbb{E}\bigl(\mathbb{V}(X|\mathscr{G})\bigr)} + \sqrt{\frac{2}{\pi}}\frac{\sqrt{\mathbb{V}\bigl(\mathbb{E}(X|\mathscr{G})\bigr)}\mathbb{V}\bigl(\mathbb{V}(X|\mathscr{G})\bigr)^{1/4}}{\mathbb{E}\bigl(\mathbb{V}(X|\mathscr{G})\bigr)}.\]
\end{thm}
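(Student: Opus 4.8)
The plan is to apply Stein's method directly to the normalized variable $W = (X - \mathbb{E}(X))/\sqrt{\mathbb{E}(\mathbb{V}(X|\mathscr{G}))}$, exploiting the fact that, conditional on $\mathscr{G}$, the variable $X$ is exactly normal. Write $M = \mathbb{E}(X|\mathscr{G})$ and $S^2 = \mathbb{V}(X|\mathscr{G})$, and set $s^2 = \mathbb{E}(S^2)$, so that conditionally $X \sim \textnormal{N}(M, S^2)$. Let $f = f_z$ be the bounded solution of the Stein equation $f'(x) - x f(x) = I_{(-\infty,z]}(x) - \Phi(z)$; recall the standard bounds $\|f_z\|_\infty \le \sqrt{2\pi}/4$, $\|f_z'\|_\infty \le 1$, and that $x f_z(x)$ has the same supremum bounds. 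Then $d_K(\mathscr{L}(W),\textnormal{N}(0,1)) = \sup_z |\mathbb{E}(f'(W) - W f(W))|$, so the whole problem reduces to estimating $\mathbb{E}(W f(W) - f'(W))$.

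The key computational step is to evaluate $\mathbb{E}(W f(W) \mid \mathscr{G})$ using the conditional normality. By the Gaussian integration-by-parts identity (Stein's identity applied to the conditional law $\textnormal{N}(M,S^2)$), for any absolutely continuous $g$ with suitable growth, $\mathbb{E}\bigl((X - M) g(X)\mid\mathscr{G}\bigr) = S^2\,\mathbb{E}\bigl(g'(X)\mid\mathscr{G}\bigr)$. Applying this with $g(x) = f\bigl((x - \mathbb{E}(X))/s\bigr)$ and rearranging $X - \mathbb{E}(X) = (X - M) + (M - \mathbb{E}(X))$, one gets
\[
\mathbb{E}\bigl(W f(W)\mid\mathscr{G}\bigr) = \frac{S^2}{s^2}\,\mathbb{E}\bigl(f'(W)\mid\mathscr{G}\bigr) + \frac{M - \mathbb{E}(X)}{s}\,\mathbb{E}\bigl(f(W)\mid\mathscr{G}\bigr).
\]
Taking expectations and subtracting $\mathbb{E}(f'(W))$ yields
\[
\mathbb{E}\bigl(W f(W) - f'(W)\bigr) = \mathbb{E}\Bigl(\bigl(\tfrac{S^2}{s^2} - 1\bigr) f'(W)\Bigr) + \mathbb{E}\Bigl(\tfrac{M - \mathbb{E}(X)}{s} f(W)\Bigr).
\]
Now I bound each term. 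For the first, $|f'| \le 1$ gives a bound of $\mathbb{E}|S^2/s^2 - 1| \le \sqrt{\mathbb{V}(S^2)}/s^2$ by Cauchy–Schwarz (or Jensen), which is exactly the first term in the claimed bound. For the second term, the naive bound $\|f\|_\infty\,\mathbb{E}|M - \mathbb{E}(X)|/s$ is not quite sharp enough in the right way; instead I would use that $\mathbb{E}(M - \mathbb{E}(X)) = 0$, so I can subtract a constant multiple of $f$ evaluated at a reference point. A cleaner route: condition first on $\mathscr{G}$ and write $\mathbb{E}((M - \mathbb{E}(X)) f(W)\mid\mathscr{G}) = (M - \mathbb{E}(X))\,\mathbb{E}(f(W)\mid\mathscr{G})$; since conditionally $W$ is $\textnormal{N}((M-\mathbb{E}(X))/s, S^2/s^2)$, one can compare $\mathbb{E}(f(W)\mid\mathscr{G})$ with the value it would take if $S^2/s^2 = 1$ and $M = \mathbb{E}(X)$, controlling the difference via $\|f'\|_\infty$ and the smoothing properties of the Gaussian, picking up the mean term $\mathbb{V}(M)/s^2$ and a cross term of order $\sqrt{\mathbb{V}(M)}\,\mathbb{V}(S^2)^{1/4}/s^2$ with the constant $\sqrt{2/\pi}$ coming from $\mathbb{E}|N(0,1)|$-type bounds on the Gaussian density's modulus of continuity. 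Summing the three contributions and taking the supremum over $z$ gives the stated inequality; the equality on the left is just the scale-location invariance \eqref{E:Kolmogorovscaleinvariance}.

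The main obstacle will be the second and third terms: getting the mean-fluctuation contribution $\mathbb{V}(M)/s^2$ and especially the cross term with the exponent $1/4$ and the constant $\sqrt{2/\pi}$ requires more than the crude $\|f\|_\infty$, $\|f'\|_\infty$ bounds. The trick is to exploit that conditionally on $\mathscr{G}$ we are comparing two genuine Gaussians — $\textnormal{N}((M-\mathbb{E}(X))/s, S^2/s^2)$ versus $\textnormal{N}(0,1)$ — so one should use a direct estimate on $|\mathbb{E}(f(W)\mid\mathscr{G}) - \mathbb{E}(f(Z))|$ in terms of the discrepancy in mean and variance (a Gaussian-to-Gaussian comparison, e.g. via the explicit density or an interpolation/Ornstein–Uhlenbeck argument), rather than re-running Stein. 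One must also verify the growth conditions needed to justify the conditional Gaussian integration by parts, which is routine given $\|f\|_\infty, \|f'\|_\infty < \infty$ and $\mathbb{E}(X^2) < \infty$ (so that $\mathbb{E}(S^2) < \infty$ and $\mathbb{E}(M^2) < \infty$), and handle the degenerate case $\mathbb{E}(S^2) = 0$ separately (then $X$ is a.s. constant and both sides are easily compared).
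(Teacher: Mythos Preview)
Your proposal is correct and follows essentially the same route as the paper: Stein's method applied to the standardized variable, the conditional Gaussian Stein identity to obtain the two-term decomposition into a variance-fluctuation piece and a mean-fluctuation piece, the trivial $\|f'\|_\infty$ bound for the first, and for the second the centering trick $\mathbb{E}(M-\mathbb{E}(X))=0$ together with a Gaussian-to-Gaussian comparison. The only point where the paper is more explicit is that your ``Gaussian-to-Gaussian comparison'' is carried out there via an additive coupling: on $\{S^2\ge s^2\}$ write $X\stackrel{d}{=}M+Y_1+Y_2$ and $Y\stackrel{d}{=}\mu+Y_1$ with $Y_1\sim\textnormal{N}(0,s^2)$, $Y_2\sim\textnormal{N}(0,S^2-s^2)$ (and symmetrically on the complement), so that $\mathbb{E}|Y_2|=\sqrt{\tfrac{2}{\pi}|S^2-s^2|}$ produces exactly the $\sqrt{2/\pi}$ and the $\mathbb{V}(S^2)^{1/4}$ after Cauchy--Schwarz.
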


\begin{proof}
The following identity, called the Stein identity for the N$(0,1)$ distribution, was originally derived in \cite{Stein1972} (for more information, see Chen and Shao \cite{ChenShao2005} and the references therein): if $Z$ is any real valued random variable, then $Z\sim\textnormal{N}(0,1)$ if and only if
\begin{equation} \label{E:normalsteinidentity}
\mathbb{E}\bigl(f'(Z) - Zf(Z)\bigr) = 0 \qquad\forall f\in\mathcal{C}_{bd},
\end{equation}
where $\mathcal{C}_{bd}$ is the set of continuous, piecewise continuously differentiable functions $f:\mathbb{R}\to\mathbb{R}$ such that $\mathbb{E}(|f'(Z_{0,1})|)<\infty$ if $Z_{0,1}\sim\textnormal{N}(0,1)$.

Using \eqref{E:normalsteinidentity}, we shall first derive a similar Stein identity for the $\textnormal{N}(\mu,\sigma^2)$ distribution, where $\mu\in\mathbb{R}$ and $\sigma\in(0,\infty)$: if $W$ is any real valued random variable, then $W\sim\textnormal{N}(\mu,\sigma^2)$ if and only if
\begin{equation} \label{E:generalnormalsteinidentity2}
\mathbb{E}\bigl(\sigma^2g'(W) - (W-\mu)g(W)\bigr) = 0 \qquad\forall g\in\mathcal{C}_{bd}^{\mu,\sigma},
\end{equation}
where $\mathcal{C}_{bd}^{\mu,\sigma}$ is the set of continuous, piecewise continuously differentiable functions $g:\mathbb{R}\to\mathbb{R}$ such that $\mathbb{E}(|g'(Z_{\mu,\sigma})|)<\infty$ if $Z_{\mu,\sigma}\sim\textnormal{N}(\mu,\sigma^2)$. To prove \eqref{E:generalnormalsteinidentity2}, we define the random variable $Z$ by $Z=\frac{1}{\sigma}(W-\mu)$, and note that $Z\sim\textnormal{N}(0,1)$ if and only if $W\sim\textnormal{N}(\mu,\sigma^2)$. We also define the mapping $T:\mathcal{C}_{bd}^{\mu,\sigma}\to\mathcal{C}_{bd}$ by $Tg(x) = \sigma g(\sigma x + \mu)$. $T$ is easily seen to be a bijection with inverse $T^{-1}f(y) = \frac{1}{\sigma}f(\frac{y-\mu}{\sigma})$. This gives:
\[\sigma^2g'(W) - (W-\mu)g(W) = \sigma^2g'(\sigma Z+\mu) - (\sigma Z+\mu-\mu)g(\sigma Z+\mu)\]
\[= [Tg]'(Z) - Z [Tg](Z) \qquad\forall g\in\mathcal{C}_{bd}^{\mu,\sigma},\]
and this in combination with \eqref{E:normalsteinidentity} gives \eqref{E:generalnormalsteinidentity2}.

We next consider the following functional equation, which we propose to call the Stein equation for the $\textnormal{N}(\mu,\sigma^2)$ distribution. It arises in a natural way from \eqref{E:generalnormalsteinidentity2}:
\begin{equation} \label{E:generalnormalsteineq2}
\sigma^2g'(y) - (y-\mu)g(y) = I_{(-\infty,z]}\bigl(\frac{y-\mu}{\sigma}\bigr) - \Phi(z) \qquad\forall y\in\mathbb{R},
\end{equation}
where $z\in\mathbb{R}$. For each fixed $z\in\mathbb{R}$, it is clear that a function $g\in\mathcal{C}_{bd}^{\mu,\sigma}$ satisfies \eqref{E:generalnormalsteineq2} if and only if the function $f = Tg\in\mathcal{C}_{bd}$ (defined above) satisfies the functional equation
\begin{equation} \label{E:equivalentnormalsteineq2}
f'(x) - xf(x) = I_{(-\infty,z]}(x) - \Phi(z) \qquad\forall x\in\mathbb{R},
\end{equation}
which is the classical Stein equation for the N$(0,1)$ distribution. We obtain from Section 2.1 in \cite{ChenShao2005} that \eqref{E:equivalentnormalsteineq} has the solution $f = f_z$, where
\[f_z(x) = e^{x^2/2}\int_{-\infty}^x\bigl[I_{(-\infty,z]}(u) - \Phi(z)\bigr]e^{-u^2/2}du\qquad\forall x\in\mathbb{R}.\]
It is also shown in Section 2.2 in \cite{ChenShao2005} that $f_z$ is bounded, continuous, and continuously differentiable except at $x=z$. Moreover, $f_z$ satisfies:
\[0<f_z(x)\leq \frac{\sqrt{2\pi}}{4} \quad\forall x\in\mathbb{R}; \qquad |f_z'(x)|\leq 1 \quad\forall x\in\mathbb{R}.\]
Therefore, the function $g_z = T^{-1}f_z$, explicitly given by $g_z(y)= \frac{1}{\sigma}f_z(\frac{y-\mu}{\sigma})$, is a solution to \eqref{E:generalnormalsteineq2}. $g_z$ is bounded, continuous, and continuously differentiable except at $y=\sigma z+\mu$, and satisfies:
\begin{equation} \label{E:generalsteineqsolutionbounds2}
0<g_z(y)\leq \frac{\sqrt{2\pi}}{4\sigma} \quad\forall y\in\mathbb{R}; \qquad |g_z'(y)| \leq \frac{1}{\sigma^2} \quad\forall y\in\mathbb{R}.
\end{equation}

For the remainder of the proof, we define for convenience $\mathcal{C}_{bbd}$ as the set of bounded, continuous, piecewise continuously differentiable functions $g:\mathbb{R}\to\mathbb{R}$ with bounded derivative. By definition, $\mathcal{C}_{bbd}\subset\mathcal{C}_{bd}^{\mu,\sigma}$ for each $\mu\in\mathbb{R}$, $\sigma\in(0,\infty)$, and by \eqref{E:generalsteineqsolutionbounds2}, $g_z\in\mathcal{C}_{bbd}$ for each $z\in\mathbb{R}$. Recalling that the random variable $X$ has a conditionally normal distribution given $\mathscr{G}$, we obtain from \eqref{E:generalnormalsteinidentity2}:
\[\mathbb{E}\Bigl(\mathbb{V}(X|\mathscr{G}) g'(X) - \bigl(X-\mathbb{E}(X|\mathscr{G})\bigr)g(X)\bigr|\mathscr{G}\Bigr) = 0 \qquad\textnormal{$P$-a.s.}\qquad\forall g\in\mathcal{C}_{bbd}.\]
Taking expectations and rewriting, this gives:
\begin{equation} \label{E:expectedconditionalsteinidentity2}
\mathbb{E}\bigl(\mathbb{V}(X|\mathscr{G}) g'(X) + \mathbb{E}(X|\mathscr{G})g(X)\bigr) = \mathbb{E}\bigl(Xg(X)\bigr) \qquad\forall g\in\mathcal{C}_{bbd}.
\end{equation}
From the definition of Kolmogorov distance and \eqref{E:Kolmogorovscaleinvariance}, it follows that for any $\mu\in\mathbb{R}$ and $\sigma\in(0,\infty)$,
\[d_K\bigl(\mathscr{L}(X),\textnormal{N$(\mu,\sigma^2)$}\bigr) = d_K\bigl(\mathscr{L}\bigl(\frac{X-\mu}{\sigma}\bigr),\textnormal{N$(0,1)$}\bigr) = \sup_{z\in\mathbb{R}}\bigl|\mathbb{P}\bigl(\frac{X-\mu}{\sigma} \leq z\bigr) - \Phi(z)\bigr|,\]
and, using \eqref{E:generalnormalsteineq2} and \eqref{E:expectedconditionalsteinidentity2},
\begin{equation} \label{E:expectedconditionalsteinrawbound2}
\begin{split}
\mathbb{P}\bigl(\frac{X-\mu}{\sigma} \leq z\bigr) - \Phi(z) &= \mathbb{E}\bigl(\sigma^2 g_z'(X) - (X-\mu) g_z(X)\bigr)\\
= \mathbb{E}\bigl((\sigma^2-\mathbb{V}(X|\mathscr{G})) g_z'(X) &+ (\mu-\mathbb{E}(X|\mathscr{G})) g_z(X)\bigr) \qquad\forall z\in\mathbb{R}.
\end{split}
\end{equation}
If we choose $\mu =\mathbb{E}(X)$ and $\sigma^2 = \mathbb{E}\bigl(\mathbb{V}(X|\mathscr{G})\bigr)$, we get:
\[\bigl|\mathbb{E}\bigl((\sigma^2-\mathbb{V}(X|\mathscr{G})) g_z'(X)\bigr)\bigr| \leq \mathbb{E}\bigl(\bigl|\sigma^2-\mathbb{V}(X|\mathscr{G})\bigr|\bigr)\frac{1}{\sigma^2} \leq  \frac{\sqrt{\mathbb{V}\bigl(\mathbb{V}(X|\mathscr{G})\bigr)}}{\mathbb{E}\bigl(\mathbb{V}(X|\mathscr{G})\bigr)} \qquad\forall z\in\mathbb{R},\]
using \eqref{E:generalsteineqsolutionbounds2} and H\"older's inequality. For the second term on the right hand side of \eqref{E:expectedconditionalsteinrawbound2}, we will use a coupling, similar to the one used in the proof of Theorem 1.C in Barbour et al.~\cite{BarbourHolstJanson1992}; the latter theorem deals with Poisson approximations for mixtures of Poisson distributions. First, letting the random variable $Y\sim\textnormal{N}(\mu,\sigma^2)$ be independent of $\mathscr{G}$, we can write:
\[\mathbb{E}\bigl((\mu-\mathbb{E}(X|\mathscr{G})) g_z(X)\bigr) = \mathbb{E}\bigl((\mu-\mathbb{E}(X|\mathscr{G}))(g_z(X) - g_z(Y))\bigr)\]
\[= \mathbb{E}\bigl((\mu-\mathbb{E}(X|\mathscr{G}))I_A\mathbb{E}(g_z(X) - g_z(Y)|\mathscr{G})\bigr)\]
\[+ \mathbb{E}\bigl((\mu-\mathbb{E}(X|\mathscr{G}))I_{A^c}\mathbb{E}(g_z(X) - g_z(Y)|\mathscr{G})\bigr) \qquad\forall z\in\mathbb{R},\]
where $A=\{\sigma^2 \leq\mathbb{V}(X|\mathscr{G})\}$. For each $\omega\in A$, we construct a probability space with two independent random variables $Y_1\sim\textnormal{N}(0,\sigma^2)$ and $Y_2\sim\textnormal{N}(0,\mathbb{V}(X|\mathscr{G})-\sigma^2)$, so that $\mathbb{E}(X|\mathscr{G}) + Y_1+Y_2\sim\textnormal{N}(\mathbb{E}(X|\mathscr{G}),\mathbb{V}(X|\mathscr{G}))$, and $\mu + Y_1\sim\textnormal{N}(\mu,\sigma^2)$. Using this coupling, and the fact that $\lVert g'_z\rVert = \sup_{x\in\mathbb{R}}|g'_z(x)| \leq \frac{1}{\sigma^2}$, we obtain:
\[\bigl|\mathbb{E}\bigl((\mu-\mathbb{E}(X|\mathscr{G}))I_A\mathbb{E}(g_z(X) - g_z(Y)|\mathscr{G})\bigr)\bigr|\]
\[= \bigl|\mathbb{E}\bigl((\mu-\mathbb{E}(X|\mathscr{G}))I_A\mathbb{E}(g_z(\mathbb{E}(X|\mathscr{G}) + Y_1+Y_2) - g_z(\mu + Y_1)|\mathscr{G})\bigr)\bigr|\]
\[\leq \mathbb{E}\bigl(\bigl|\mu-\mathbb{E}(X|\mathscr{G})\bigr| I_A\lVert g_z'\rVert \mathbb{E}(|\mu-\mathbb{E}(X|\mathscr{G}) - Y_2||\mathscr{G})\bigr)\]
\[\leq \frac{1}{\sigma^2}\mathbb{E}\bigl((\mu-\mathbb{E}(X|\mathscr{G}))^2I_A\bigr) + \frac{1}{\sigma^2}\mathbb{E}\bigl(\bigl|\mu-\mathbb{E}(X|\mathscr{G})\bigr|I_A\mathbb{E}(|Y_2||\mathscr{G})\bigr)\]
\[= \frac{1}{\sigma^2}\mathbb{E}\bigl((\mu-\mathbb{E}(X|\mathscr{G}))^2I_A\bigr) + \frac{1}{\sigma^2}\mathbb{E}\bigl(\bigl|\mu-\mathbb{E}(X|\mathscr{G})\bigr|I_A\sqrt{\frac{2}{\pi}\bigl|\sigma^2-\mathbb{V}(X|\mathscr{G})\bigr|}\bigr)\qquad\forall z\in\mathbb{R}.\]
Similarly, for each $\omega\in A^c$, we construct a probability space with two independent random variables $\widehat Y_1\sim\textnormal{N}(0,\mathbb{V}(X|\mathscr{G}))$ and $\widehat Y_2\sim\textnormal{N}(0,\sigma^2-\mathbb{V}(X|\mathscr{G}))$, so that $\mathbb{E}(X|\mathscr{G}) + \widehat Y_1\sim\textnormal{N}(\mathbb{E}(X|\mathscr{G}),\mathbb{V}(X|\mathscr{G}))$, and $\mu + \widehat Y_1 + \widehat Y_2\sim\textnormal{N}(\mu,\sigma^2)$. This gives, after some calculations,
\[\bigl|\mathbb{E}\bigl((\mu-\mathbb{E}(X|\mathscr{G}))I_{A^c}\mathbb{E}(g_z(X) - g_z(Y)|\mathscr{G})\bigr)\bigr|\]
\[\leq \frac{1}{\sigma^2}\mathbb{E}\bigl((\mu-\mathbb{E}(X|\mathscr{G}))^2I_{A^c}\bigr) + \frac{1}{\sigma^2}\mathbb{E}\bigl(\bigl|\mu-\mathbb{E}(X|\mathscr{G})\bigr|I_{A^c}\sqrt{\frac{2}{\pi}\bigl|\sigma^2-\mathbb{V}(X|\mathscr{G})\bigr|}\bigr)\qquad\forall z\in\mathbb{R}.\]
Combining these two bounds, we get for the second term on the right hand side of \eqref{E:expectedconditionalsteinrawbound2}:
\[\bigl|\mathbb{E}\bigl((\mu-\mathbb{E}(X|\mathscr{G})) g_z(X)\bigr)\bigr|\]
\[\leq \frac{1}{\sigma^2}\mathbb{E}\bigl((\mu-\mathbb{E}(X|\mathscr{G}))^2\bigr) + \frac{1}{\sigma^2}\mathbb{E}\bigl(\bigl|\mu-\mathbb{E}(X|\mathscr{G})\bigr|\sqrt{\frac{2}{\pi}\bigl|\sigma^2-\mathbb{V}(X|\mathscr{G})\bigr|}\bigr)\]
\[\leq \frac{1}{\sigma^2}\mathbb{E}\bigl((\mu-\mathbb{E}(X|\mathscr{G}))^2\bigr) + \frac{1}{\sigma^2}\sqrt{\mathbb{E}\bigl((\mu-\mathbb{E}(X|\mathscr{G}))^2\bigr)}\sqrt{\frac{2}{\pi}\mathbb{E}(\bigl|\sigma^2-\mathbb{V}(X|\mathscr{G})\bigr|)}\]
\[\leq \frac{\mathbb{V}\bigl(\mathbb{E}(X|\mathscr{G})\bigr)}{\mathbb{E}\bigl(\mathbb{V}(X|\mathscr{G})\bigr)} + \sqrt{\frac{2}{\pi}}\frac{\sqrt{\mathbb{V}\bigl(\mathbb{E}(X|\mathscr{G})\bigr)}\mathbb{V}\bigl(\mathbb{V}(X|\mathscr{G})\bigr)^{1/4}}{\mathbb{E}\bigl(\mathbb{V}(X|\mathscr{G})\bigr)}\qquad\forall z\in\mathbb{R}.\]
\end{proof}

\begin{rem} \label{R:Kolmogorovboundtwonormals}
In the case when $\mathbb{E}(X|\mathscr{G})\equiv m$ and $\mathbb{V}(X|\mathscr{G})\equiv \tau^2$ for deterministic constants $m\in\mathbb{R}$ and $\tau>0$, meaning that $X\sim\textnormal{N}(m,\tau^2)$ independently of $\mathscr{G}$, we obtain from \eqref{E:expectedconditionalsteinrawbound2} and \eqref{E:generalsteineqsolutionbounds2},
\[d_K\bigl(\textnormal{N$(m,\tau^2)$},\textnormal{N$(\mu,\sigma^2)$}\bigr)\leq \frac{1}{\sigma^2}|\sigma^2-\tau^2| + \frac{\sqrt{2\pi}}{4\sigma}|\mu-m|.\]
\end{rem}

Turning to Theorem~\ref{T:Wassersteinboundmixednormal}, we define $\mathcal{H}_2$ as the set of all real valued absolutely continuous functions on $(\mathbb{R},\mathscr{R})$, by which we mean all functions $h:\mathbb{R}\to\mathbb{R}$ such that $h$ has a derivative almost everywhere, $h'$ is Lebesgue integrable on every compact interval, and
\[h(b) - h(a) = \int_a^b h'(u)du \qquad\forall -\infty<a\leq b<\infty.\]
It is well-known that any Lipschitz continuous function $h:\mathbb{R}\to\mathbb{R}$ is absolutely continuous, and that $|h'(x)|\leq K$, where $K$ is the Lipschitz constant, for all $x\in\mathbb{R}$ where $h'(x)$ is defined. Moreover, as stated above, the Wasserstein distance on the space of probability measures on $(\mathbb{R},\mathscr{R})$ is defined by:
\[d_W\bigl(\mu_1,\mu_2\bigr) = \sup_{h\in\mathcal{H}_1}\bigl|\int h(x)d\mu_1(x) - \int h(x)d\mu_2(x)\bigr|,\]
where $\mathcal{H}_1$ is the set of all Lipschitz continuous functions with Lipschitz constant bounded by 1.

\begin{thm} \label{T:Wassersteinboundmixednormal}
Let $X$ be a real valued random variable such that $\mathbb{E}(X^2)<\infty$, and let $\mathscr{G}$ be a $\sigma$-algebra such that the regular conditional distribution of $X$ given $\mathscr{G}$ is normal. Then,
\[d_W\Bigl(\mathscr{L}\bigl(\frac{X-\mathbb{E}(X)}{\sqrt{\mathbb{E}(\mathbb{V}(X|\mathscr{G}))}}\bigr),\textnormal{N}(0,1))\Bigr) \leq \sqrt{\frac{2}{\pi}}\,\frac{\mathbb{V}\bigl(\mathbb{V}(X|\mathscr{G})\bigr)^{3/4}}{\mathbb{E}\bigl(\mathbb{V}(X|\mathscr{G})\bigr)^{3/2}} + \frac{\sqrt{\mathbb{V}\bigl(\mathbb{E}(X|\mathscr{G})\bigr)}\sqrt{\mathbb{V}\bigl(\mathbb{V}(X|\mathscr{G})\bigr)}}{\mathbb{E}\bigl(\mathbb{V}(X|\mathscr{G})\bigr)^{3/2}}\]
\[+ \frac{\mathbb{V}\bigl(\mathbb{E}(X|\mathscr{G})\bigr)}{\mathbb{E}\bigl(\mathbb{V}(X|\mathscr{G})\bigr)} + \sqrt{\frac{2}{\pi}}\frac{\sqrt{\mathbb{V}\bigl(\mathbb{E}(X|\mathscr{G})\bigr)}\mathbb{V}\bigl(\mathbb{V}(X|\mathscr{G})\bigr)^{1/4}}{\mathbb{E}\bigl(\mathbb{V}(X|\mathscr{G})\bigr)}.\]
\end{thm}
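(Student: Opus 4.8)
The plan is to follow the proof of Theorem~\ref{T:Kolmogorovboundmixednormal}, with the indicator test functions $I_{(-\infty,z]}$ replaced by an arbitrary $h\in\mathcal{H}_1$, the key new feature being that the Wasserstein Stein solution is smoother than the Kolmogorov one. Since \eqref{E:Wassersteinpseudoscaleinvariance} carries an extra factor $\sigma$, I work directly with the standardized variable $W=(X-\mathbb{E}(X))/\sqrt{\mathbb{E}(\mathbb{V}(X|\mathscr{G}))}$, which is conditionally $\textnormal{N}(M,S^2)$ given $\mathscr{G}$ with $M=\mathbb{E}(W|\mathscr{G})$, $S^2=\mathbb{V}(W|\mathscr{G})$, so that $\mathbb{E}(M)=0$, $\mathbb{E}(S^2)=1$, $\mathbb{V}(M)=\mathbb{V}(\mathbb{E}(X|\mathscr{G}))/\mathbb{E}(\mathbb{V}(X|\mathscr{G}))$ and $\mathbb{V}(S^2)=\mathbb{V}(\mathbb{V}(X|\mathscr{G}))/\mathbb{E}(\mathbb{V}(X|\mathscr{G}))^2$. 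It then suffices to bound $|\mathbb{E}(h(W))-\mathbb{E}(h(Z))|$ uniformly over $h\in\mathcal{H}_1$, where $Z\sim\textnormal{N}(0,1)$; we may assume the right-hand side of the asserted inequality is finite.

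For $h\in\mathcal{H}_1$ let $f_h$ solve the $\textnormal{N}(0,1)$ Stein equation $f'(x)-xf(x)=h(x)-\mathbb{E}(h(Z))$. From the properties of this solution recalled in Section~2 of \cite{ChenShao2005}, $f_h$ is bounded, continuously differentiable, and $f_h'$ is Lipschitz, with $\|f_h'\|_\infty\le1$ and Lipschitz constant of $f_h'$ at most $1$; in particular $f_h\in\mathcal{C}_{bbd}$. Since the derivation of \eqref{E:expectedconditionalsteinidentity2} used only conditional normality, the same identity applied to $W$ with $g=f_h$ gives $\mathbb{E}(S^2f_h'(W)+Mf_h(W))=\mathbb{E}(Wf_h(W))$, and plugging $W$ into the Stein equation then yields
\[\mathbb{E}(h(W))-\mathbb{E}(h(Z))=\mathbb{E}\bigl(f_h'(W)-Wf_h(W)\bigr)=\mathbb{E}\bigl((1-S^2)f_h'(W)\bigr)-\mathbb{E}\bigl(Mf_h(W)\bigr).\]
The term $|\mathbb{E}(Mf_h(W))|$ is bounded exactly as the corresponding term in the proof of Theorem~\ref{T:Kolmogorovboundmixednormal}: take $Y\sim\textnormal{N}(0,1)$ independent of $\mathscr{G}$ (so $\mathbb{E}(Mf_h(Y))=0$), split on $A=\{S^2\ge1\}$ and $A^c$, and on each part use the independent Gaussian couplings $W=M+Y_1+Y_2$, $Y=Y_1$ (respectively $W=M+\widehat Y_1$, $Y=\widehat Y_1+\widehat Y_2$) with matching variances, $|f_h(W)-f_h(Y)|\le\|f_h'\|_\infty|W-Y|$, and $\mathbb{E}(|Y_2||\mathscr{G})=\sqrt{2/\pi}\,|S^2-1|^{1/2}$; Cauchy--Schwarz together with $\mathbb{E}(|S^2-1|)\le\mathbb{V}(S^2)^{1/2}$ then gives $|\mathbb{E}(Mf_h(W))|\le\mathbb{V}(M)+\sqrt{2/\pi}\,\mathbb{V}(M)^{1/2}\mathbb{V}(S^2)^{1/4}$, which, rewriting $\mathbb{V}(M)$ and $\mathbb{V}(S^2)$ through the conditional moments of $X$, is the sum of the last two terms of the bound.

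The one genuinely new step is the term $|\mathbb{E}((1-S^2)f_h'(W))|$. In the Kolmogorov proof the analogous term was bounded crudely, using only $\|g_z'\|_\infty$, because $g_z'$ is discontinuous at the point $z$; here $f_h'$ is Lipschitz, so I can apply a coupling to it as well. Since $\mathbb{E}(1-S^2)=0$ and $Y$ is independent of $\mathscr{G}$, we have $\mathbb{E}((1-S^2)f_h'(Y))=0$, hence $\mathbb{E}((1-S^2)f_h'(W))=\mathbb{E}((1-S^2)(f_h'(W)-f_h'(Y)))$; using the same two couplings, $|f_h'(W)-f_h'(Y)|\le|W-Y|$, which on $A$ is $|M+Y_2|$ and on $A^c$ is $|M-\widehat Y_2|$, so taking conditional expectations leads to $|\mathbb{E}((1-S^2)f_h'(W))|\le\mathbb{E}(|1-S^2|\,|M|)+\sqrt{2/\pi}\,\mathbb{E}(|1-S^2|^{3/2})$. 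By Cauchy--Schwarz $\mathbb{E}(|1-S^2|\,|M|)\le\mathbb{V}(S^2)^{1/2}\mathbb{V}(M)^{1/2}$, and since $\|U\|_{3/2}\le\|U\|_2$ one has $\mathbb{E}(|1-S^2|^{3/2})\le\mathbb{E}\bigl((1-S^2)^2\bigr)^{3/4}=\mathbb{V}(S^2)^{3/4}$; this produces exactly the first two terms of the bound. Summing the two estimates and re-expressing the standardized quantities completes the proof.

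I expect the main obstacle to be this first term: it requires knowing that the Wasserstein Stein solution $f_h$ has a boundedly Lipschitz derivative (the structural difference from the Kolmogorov case, which is precisely why the Wasserstein bound gains the $\mathbb{V}(\mathbb{V}(X|\mathscr{G}))^{3/4}$-type terms in place of $\mathbb{V}(\mathbb{V}(X|\mathscr{G}))^{1/2}$), and then carrying the coupling through carefully — in particular the measurable construction of $Y_1,Y_2$ and $\widehat Y_1,\widehat Y_2$ on $A$ and $A^c$, which should be done exactly as in the proof of Theorem~\ref{T:Kolmogorovboundmixednormal}. The remaining manipulations are routine applications of Cauchy--Schwarz, the inequality $\|U\|_{3/2}\le\|U\|_2$, and the identities linking the standardized and unstandardized conditional moments.
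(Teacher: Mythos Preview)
Your proposal is correct and takes essentially the same approach as the paper: both decompose the Stein-equation error into the variance term $(\sigma^2-\mathbb{V}(X|\mathscr{G}))g_h'$ and the mean term $(\mu-\mathbb{E}(X|\mathscr{G}))g_h$, handle the latter via the coupling of Theorem~\ref{T:Kolmogorovboundmixednormal}, and bound the former by applying that same Gaussian coupling to $g_h'$, exploiting that the Wasserstein Stein solution has a bounded second derivative. The only differences are cosmetic --- you standardize first and work with the $\textnormal{N}(0,1)$ equation directly, whereas the paper carries the $\textnormal{N}(\mu,\sigma^2)$ equation throughout --- and the numerical constants you assert for $\|f_h'\|_\infty$ and the Lipschitz constant of $f_h'$ are sharper than those the paper quotes from \cite{ChenShao2005} (which in the paper's own derivation produce extra factors of $4$ and $2$ in the respective terms).
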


\begin{proof}
The first part of the proof is the same as for Theorem~\ref{T:Kolmogorovboundmixednormal}. However, as a Stein equation for the $\textnormal{N}(\mu,\sigma^2)$ distribution, we use, instead of \eqref{E:generalnormalsteineq2}:
\begin{equation} \label{E:generalnormalsteineq}
\sigma^2g'(y) - (y-\mu)g(y) = h(\frac{y-\mu}{\sigma}\bigr) - \mathbb{E}(h(Z_{0,1})) \qquad\forall y\in\mathbb{R},
\end{equation}
where $h\in\mathcal{H}_1$, and $Z_{0,1}\sim\textnormal{N}(0,1)$. For each $h\in\mathcal{H}_1$, it is clear that a function $g\in\mathcal{C}_{bd}^{\mu,\sigma}$ satisfies \eqref{E:generalnormalsteineq} if and only if the function $f = Tg\in\mathcal{C}_{bd}$ (defined in the proof of Theorem~\ref{T:Kolmogorovboundmixednormal}) satisfies the functional equation
\begin{equation} \label{E:equivalentnormalsteineq}
f'(x) - xf(x) = h(x) - \mathbb{E}(h(Z_{0,1})) \qquad\forall x\in\mathbb{R}.
\end{equation}
It is shown in \cite{ChenShao2005} that \eqref{E:equivalentnormalsteineq} has the solution $f = f_h$, where
\[f_h(x) = e^{x^2/2}\int_{-\infty}^x\bigl[h(u) - \mathbb{E}(h(Z_{0,1}))\bigr]e^{-u^2/2}du\qquad\forall x\in\mathbb{R}.\]
Moreover, for each $h\in\mathcal{H}_1$, $f_h$ is bounded, has an absolutely continuous derivative, and satisfies:
\[\lVert f_h\rVert\leq \min\bigl(\sqrt{\frac{\pi}{2}}\lVert h-\mathbb{E}(h(Z))\rVert,2\lVert h'\rVert\bigr);\]
\[\quad \lVert f'_h\rVert\leq \min\bigl(2\lVert h-\mathbb{E}(h(Z))\rVert,4\lVert h'\rVert\bigr); \qquad\lVert f''_h\rVert\leq 2\lVert h'\rVert,\]
where $\lVert\cdot\rVert$ denotes the (essential) supremum. Therefore, the function $g_h = T^{-1}f_h$, explicitly given by $g_h(y)= \frac{1}{\sigma}f_h(\frac{y-\mu}{\sigma})$, is a solution to \eqref{E:generalnormalsteineq} which is bounded, has an absolutely continuous derivative, and satisfies:
\begin{equation} \label{E:generalsteineqsolutionbounds}
\begin{split}
&\lVert g_h\rVert\leq \frac{1}{\sigma}\min\bigl(\sqrt{\frac{\pi}{2}}\lVert h-\mathbb{E}(h(Z))\rVert,2\lVert h'\rVert\bigr);\\
\lVert g'_h\rVert\leq \frac{1}{\sigma^2}&\min\bigl(2\lVert h-\mathbb{E}(h(Z))\rVert,4\lVert h'\rVert\bigr); \qquad \lVert g''_h\rVert\leq \frac{2}{\sigma^3}\lVert h'\rVert.
\end{split}
\end{equation}
As in the proof of Theorem~\ref{T:Kolmogorovboundmixednormal}, we define $\mathcal{C}_{bbd}$ as the set of bounded, piecewise continuously differentiable functions $g:\mathbb{R}\to\mathbb{R}$ with bounded derivative. By \eqref{E:generalsteineqsolutionbounds}, $g_h\in\mathcal{C}_{bbd}$ for each $h\in\mathcal{H}_1$. As before, we obtain:
\begin{equation} \label{E:expectedconditionalsteinidentity}
\mathbb{E}\bigl(\mathbb{V}(X|\mathscr{G}) g'(X) + \mathbb{E}(X|\mathscr{G})g(X)\bigr) = \mathbb{E}\bigl(Xg(X)\bigr) \qquad\forall g\in\mathcal{C}_{bbd}.
\end{equation}
By definition, the Wasserstein distance can be expressed as follows:
\[d_W\bigl(\mathscr{L}\bigl(\frac{X-\mu}{\sigma}\bigr),\textnormal{N$(0,1)$}\bigr) = \sup_{h\in\mathcal{H}_1}\bigl|\mathbb{E}h\bigl(\frac{X-\mu}{\sigma}\bigr) - \mathbb{E}(h(Z_{0,1}))\bigr|,\]
where, using \eqref{E:generalnormalsteineq} and \eqref{E:expectedconditionalsteinidentity},
\begin{equation} \label{E:expectedconditionalsteinrawbound}
\begin{split}
\mathbb{E}\bigl(h(\frac{X-\mu}{\sigma})\bigr) - \mathbb{E}(h(Z_{0,1})) &= \mathbb{E}\bigl(\sigma^2 g_{h}'(X) - (X-\mu) g_{h}(X)\bigr)\\
= \mathbb{E}\bigl((\sigma^2-\mathbb{V}(X|\mathscr{G})) g_{h}'(X) &+ (\mu-\mathbb{E}(X|\mathscr{G}))) g_{h}(X)\bigr)\qquad\forall h\in\mathcal{H}_1.
\end{split}
\end{equation}
If we choose $\mu =\mathbb{E}(X)$ and $\sigma^2 = \mathbb{E}\bigl(\mathbb{V}(X|\mathscr{G})\bigr)$, the second term on the right hand side of \eqref{E:expectedconditionalsteinrawbound} can be handled in the same way as in the proof of Theorem~\ref{T:Kolmogorovboundmixednormal}, yielding the bound
\[\mathbb{E}\bigl((\mu-\mathbb{E}(X|\mathscr{G})) g_h(X)\bigr) \leq 4\frac{\mathbb{V}\bigl(\mathbb{E}(X|\mathscr{G})\bigr)}{\mathbb{E}\bigl(\mathbb{V}(X|\mathscr{G})\bigr)} + 4\sqrt{\frac{2}{\pi}}\frac{\sqrt{\mathbb{V}\bigl(\mathbb{E}(X|\mathscr{G})\bigr)}\mathbb{V}\bigl(\mathbb{V}(X|\mathscr{G})\bigr)^{1/4}}{\mathbb{E}\bigl(\mathbb{V}(X|\mathscr{G})\bigr)}\quad\forall h\in\mathcal{H}_1.\]
For the first term on the right hand side of \eqref{E:expectedconditionalsteinrawbound}, letting the random variable $Y\sim\textnormal{N}(\mu,\sigma^2)$ be independent of $\mathscr{G}$, we can write:
\[\mathbb{E}\bigl((\sigma^2-\mathbb{V}(X|\mathscr{G})) g'_h(X)\bigr) = \mathbb{E}\bigl((\sigma^2-\mathbb{V}(X|\mathscr{G}))(g'_h(X) - g'_h(Y))\bigr)\]
\[= \mathbb{E}\bigl((\sigma^2-\mathbb{V}(X|\mathscr{G}))I_A\mathbb{E}(g'_h(X) - g'_h(Y)|\mathscr{G})\bigr)\]
\[+ \mathbb{E}\bigl((\sigma^2-\mathbb{V}(X|\mathscr{G}))I_{A^c}\mathbb{E}(g'_h(X) - g'_h(Y)|\mathscr{G})\bigr) \qquad\forall h\in\mathcal{H}_1,\]
where $A=\{\sigma^2 \leq\mathbb{V}(X|\mathscr{G})\}$. 
We can now use exactly the same coupling as for the second term on the right hand side of \eqref{E:expectedconditionalsteinrawbound}, together with the fact that $\lVert g''_z\rVert \leq \frac{2}{\sigma^3}$, to obtain, after some calculations:
\[\bigl|\mathbb{E}\bigl((\sigma^2-\mathbb{V}(X|\mathscr{G})) g'_h(X)\bigr)\bigr|\]
\[\leq \frac{2}{\sigma^3}\mathbb{E}\bigl(\bigl|\sigma^2-\mathbb{V}(X|\mathscr{G})\bigr|\sqrt{\frac{2}{\pi}\bigl|\sigma^2-\mathbb{V}(X|\mathscr{G})\bigr|\bigr)} + \frac{2}{\sigma^3}\mathbb{E}\bigl(\bigl|\sigma^2-\mathbb{V}(X|\mathscr{G})\bigr|\bigl|\mu-\mathbb{E}(X|\mathscr{G})\bigr|\bigr)\]
\[\leq \sqrt{\frac{2}{\pi}}\frac{2}{\sigma^3}\mathbb{E}\bigl(\bigl|\sigma^2-\mathbb{V}(X|\mathscr{G})\bigr|^{3/2}\bigr) + \frac{2}{\sigma^3}\sqrt{\mathbb{E}\bigl((\mu-\mathbb{E}(X|\mathscr{G}))^2\bigr)}\sqrt{\mathbb{E}\bigl((\sigma^2-\mathbb{V}(X|\mathscr{G}))^2\bigr)}\]
\[\leq 2\sqrt{\frac{2}{\pi}}\,\frac{\mathbb{V}\bigl(\mathbb{V}(X|\mathscr{G})\bigr)^{3/4}}{\mathbb{E}\bigl(\mathbb{V}(X|\mathscr{G})\bigr)^{3/2}} + \frac{2\sqrt{\mathbb{V}\bigl(\mathbb{E}(X|\mathscr{G})\bigr)}\sqrt{\mathbb{V}\bigl(\mathbb{V}(X|\mathscr{G})\bigr)}}{\mathbb{E}\bigl(\mathbb{V}(X|\mathscr{G})\bigr)^{3/2}}\qquad\forall h\in\mathcal{H}_1.\]
\end{proof}

\begin{rem} \label{R:Wassersteinboundtwonormals}
In the case when $\mathbb{E}(X|\mathscr{G})\equiv m$ and $\mathbb{V}(X|\mathscr{G})\equiv \tau^2$ for deterministic constants $m\in\mathbb{R}$ and $\tau>0$, we obtain from \eqref{E:expectedconditionalsteinrawbound} and \eqref{E:generalsteineqsolutionbounds},
\[d_W\bigl(\textnormal{N$(m,\tau^2)$},\textnormal{N$(\mu,\sigma^2)$}\bigr) \leq \frac{4}{\sigma^2}|\sigma^2-\tau^2| + \frac{2}{\sigma}|\mu-m|.\]
\end{rem}

Finally, we point out that it is possible to derive lower bounds for the Kolmogorov and Wasserstein distances under the same assumptions as in Theorems~\ref{T:Kolmogorovboundmixednormal} and \ref{T:Wassersteinboundmixednormal}. Using ideas introduced in \cite{BarbourHall1984} (see also Chapter 3 in \cite{BarbourHolstJanson1992}), we state and derive lower bounds in the Appendix (Theorem~\ref{T:Kolmogorovlowerboundmixednormal}; the bounds for the two distances are identical apart from a constant factor). It can be seen from Theorem~\ref{T:Kolmogorovlowerboundmixednormal} that under mild conditions on the asymptotics of the higher order moments $\mathbb{E}((\mu-\mathbb{E}(X|\mathscr{G}))^4)$ and $\mathbb{E}(|\mathbb{V}(X|\mathscr{G})-\sigma^2|(\mu-\mathbb{E}(X|\mathscr{G}))^2)$, the upper bounds in Theorems~\ref{T:Kolmogorovboundmixednormal} and \ref{T:Wassersteinboundmixednormal} leaves little room for improvement. In particular, the term $\frac{\mathbb{V}\bigl(\mathbb{E}(X|\mathscr{G})\bigr)}{\mathbb{E}\bigl(\mathbb{V}(X|\mathscr{G})\bigr)}$ cannot be replaced by another that converges faster to 0. However, the lower bound would allow for $\frac{\mathbb{V}\bigl(\mathbb{V}(X|\mathscr{G})\bigr)^{3/4}}{\mathbb{E}\bigl(\mathbb{V}(X|\mathscr{G})\bigr)^{3/2}}$ to be replaced by $\frac{\mathbb{V}\bigl(\mathbb{V}(X|\mathscr{G})\bigr)}{\mathbb{E}\bigl(\mathbb{V}(X|\mathscr{G})\bigr)}$ (times some constant) in the first term, should this turn out to be possible.

\section{The Yule-Ornstein-Uhlenbeck model}\label{sec:applicationtoyoumodel}
In order to apply the results in Section~\ref{sec:normalapproxnormalmixture} to the YOU model, we first need to condition on an appropriate
$\sigma$-algebra, and then obtain formul\ae, along with their asymptotic behaviours, for
the means and variances of the conditional means and variances. 
Since the OU process is Gaussian, conditionally on the phylogeny the values of the traits at the $n$ leaves will have an $n$-dimensional 
Gaussian distribution. Hence, the natural $\sigma$-algebra to
condition on is the $\sigma$-algebra generated by the pure birth tree. For a tree with $n$ leaves,
denote this $\sigma$-algebra by $\mathcal{Y}_{n}$. Moreover, we use the following notation: $\Gamma(\cdot)$ is the gamma function, $H_n = 1 + \frac{1}{2} + \ldots + \frac{1}{n}$, and
\[b_{n,x} = {1\over 1+x}\cdot{2\over 2+x}\cdot \ldots \cdot{n\over n+x}={\Gamma(n+1) \Gamma(x+1)\over  \Gamma(n+x+1)},\quad x>-1.\]
\begin{thm}\label{T:YOU}
Consider the YOU model with $\alpha \ge 1/2$. Let $\overline{X}_{n}$ be the average value of the traits at the $n$ leaves, let $\overline{Y}_{n} = \overline{X}_{n}\sqrt{\frac{2\alpha}{\sigma_{a}^{2}}}$, and let $\delta = X(0)\sqrt{\frac{2\alpha}{\sigma_{a}^{2}}}$. Let also $\mu_n = \mathbb{E}(\overline{Y}_{n})$ and $\sigma_n^2 = \mathbb{E}(\mathbb{V}(\overline{Y}_{n}|\mathscr{G}))$.

(i)  If $\alpha=\frac{1}{2}$, then: $d_{K}\bigl(\mathcal{L}\left(\frac{\overline{Y}_{n}-\mu_n}{\sigma_n}\right),\mathrm{N}(0,1)\bigr)= \textnormal{O}(\ln^{-1} n)$ as $n\to\infty$, 
where $\mu_n = \delta b_{n,1/2}$ and $\sigma_n^2 = \frac{1}{n} + (1-\frac{1}{n})\Bigl(\frac{2}{n-1}(H_n-1)-\frac{1}{n-1}\Bigr) - b_{n,1}$.
Moreover, $(\frac{n}{\ln n})^{1/2}\,\mu_n\to 0$ and $\frac{n}{\ln n}\,\sigma_n^2\to 2$ as $n\to\infty$, so $(\frac{n}{\ln n})^{1/2}\,\overline{Y}_{n}\ \xrightarrow{\ d\ }\ \textnormal{N}(0,2)$ as $n\to\infty$.

(ii) If $\alpha>\frac{1}{2}$, then: $d_{K}\bigl(\mathcal{L}\left(\frac{\overline{Y}_{n}-\mu_n}{\sigma_n}\right),\mathrm{N}(0,1)\bigr) = \begin{cases}
\textnormal{O}(n^{-2\alpha+1}),&\textnormal{$\frac{1}{2}<\alpha<\frac{3}{4}$;}\\
\textnormal{O}(\frac{\ln^{1/2}n}{n^{1/2}}),&\textnormal{$\alpha=\frac{3}{4}$;}\\
\textnormal{O}(n^{-1/2}),&\textnormal{$\alpha>\frac{3}{4}$,}
\end{cases}$\\
as $n\to\infty$, where $\mu_n = \delta b_{n,\alpha}$, and $\sigma_n^2 = \frac{1}{n} + (1-\frac{1}{n})\Bigl(\frac{2 - (n+1)(2\alpha + 1)b_{n,2\alpha}}{(n-1)(2\alpha-1)}\Bigr) - b_{n,2\alpha}$.
Moreover, $n^{1/2}\,\mu_n\to 0$ and $n\sigma_n^2\to\frac{2\alpha+1}{2\alpha-1}$ as $n\to\infty$, so $n^{1/2}\,\;\overline{Y}_{n}\ \xrightarrow{\ d\ }\ \textnormal{N}(0,\frac{2\alpha+1}{2\alpha-1})$ as $n\to\infty$.
\end{thm}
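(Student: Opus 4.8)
The plan is to apply Theorems~\ref{T:Kolmogorovboundmixednormal} to $X = \overline{Y}_n$ with $\mathscr{G} = \mathcal{Y}_n$, the $\sigma$-algebra generated by the pure-birth tree. Conditionally on $\mathcal{Y}_n$, the trait values at the tips form an $n$-dimensional Gaussian vector (the OU process is Gaussian and the tree is fixed), so $\overline{Y}_n$ is conditionally normal and the hypothesis of the theorem is met. The whole proof then reduces to two tasks: (a) compute $\mathbb{E}(\overline{Y}_n|\mathcal{Y}_n)$ and $\mathbb{V}(\overline{Y}_n|\mathcal{Y}_n)$ explicitly as functions of the tree, hence $\mu_n = \mathbb{E}(\overline{Y}_n)$, $\sigma_n^2 = \mathbb{E}(\mathbb{V}(\overline{Y}_n|\mathcal{Y}_n))$, $\mathbb{V}(\mathbb{E}(\overline{Y}_n|\mathcal{Y}_n))$ and $\mathbb{V}(\mathbb{V}(\overline{Y}_n|\mathcal{Y}_n))$; and (b) substitute these into the bound of Theorem~\ref{T:Kolmogorovboundmixednormal} and determine the asymptotic order of each of the three terms as $n\to\infty$, separately in the cases $\alpha=\tfrac12$, $\tfrac12<\alpha<\tfrac34$, $\alpha=\tfrac34$, $\alpha>\tfrac34$.

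For step (a) I would use the solution of the OU SDE along each lineage: the trait at tip $i$ is $X(0)e^{-\alpha T} + (\text{centered Gaussian integral})$, where $T$ is the tree height; after the $\sqrt{2\alpha/\sigma_a^2}$ rescaling this gives $\mathbb{E}(\overline{Y}_n|\mathcal{Y}_n) = \delta\cdot\frac1n\sum_i e^{-\alpha T_i}$ and a conditional variance expressed through pairwise coalescent times $\tau_{ij}$ of tips $i,j$ (the time back to their most recent common ancestor), namely $\mathbb{V}(\overline{Y}_n|\mathcal{Y}_n) = \frac1{n^2}\sum_{i,j} e^{-\alpha(T_i+T_j)}(e^{2\alpha(T-\tau_{ij})}-1)$ or the equivalent standard YOU covariance formula. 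Taking expectations over the Yule tree reduces everything to known quantities: the distribution of a random tip's height, the distribution of the number of speciation events on a random lineage, and the law of the coalescent time of two (or three, or four) randomly chosen tips. A substantial part of these computations — in particular $\mathbb{E}(e^{-\alpha T})$, $\mathbb{E}(e^{-2\alpha T})$, the mixed moments entering $\mathbb{V}(\mathbb{E}(\overline{Y}_n|\mathcal{Y}_n))$, and the second moment of the conditional variance — was already carried out in Bartoszek and Sagitov~\cite{BartoszekSagitov2015}, so I would quote those formulas, recording the closed forms in terms of $H_n$ and $b_{n,x}$ that appear in the statement, and verify that $n^{1/2}\mu_n\to0$, $n\sigma_n^2\to\frac{2\alpha+1}{2\alpha-1}$ (resp.\ the $\ln n$-normalized versions when $\alpha=\tfrac12$).

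For step (b), once one knows the rates $\mathbb{V}(\mathbb{E}(\overline{Y}_n|\mathcal{Y}_n)) = \mathrm{O}(n^{-2\alpha})$-type behaviour (with a logarithmic correction at $\alpha=\tfrac12$ and a different power for small $\alpha$), $\mathbb{V}(\mathbb{V}(\overline{Y}_n|\mathcal{Y}_n)) = \mathrm{O}(n^{-2})$-type behaviour, and $\mathbb{E}(\mathbb{V}(\overline{Y}_n|\mathcal{Y}_n))\asymp n^{-1}$ (or $\asymp n^{-1}\ln n$), plugging into
\[
\frac{\sqrt{\mathbb{V}(\mathbb{V}(\overline{Y}_n|\mathcal{Y}_n))}}{\mathbb{E}(\mathbb{V}(\overline{Y}_n|\mathcal{Y}_n))}
+\frac{\mathbb{V}(\mathbb{E}(\overline{Y}_n|\mathcal{Y}_n))}{\mathbb{E}(\mathbb{V}(\overline{Y}_n|\mathcal{Y}_n))}
+\sqrt{\tfrac2\pi}\,\frac{\sqrt{\mathbb{V}(\mathbb{E}(\overline{Y}_n|\mathcal{Y}_n))}\,\mathbb{V}(\mathbb{V}(\overline{Y}_n|\mathcal{Y}_n))^{1/4}}{\mathbb{E}(\mathbb{V}(\overline{Y}_n|\mathcal{Y}_n))}
\]
and balancing the powers of $n$ gives the stated $\mathrm{O}(\cdot)$ in each regime; the convergence $\overline{Y}_n\xrightarrow{d}$N follows because $d_K\to0$ together with $\mu_n\to0$, $\sigma_n^2\to(\text{limit})$ after the appropriate scaling. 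The main obstacle I expect is step (a): getting the fourth-moment-type quantity $\mathbb{V}(\mathbb{V}(\overline{Y}_n|\mathcal{Y}_n))$ with the correct constant and rate, since this involves the joint law of coalescent times of up to four tips in a Yule tree and the bookkeeping is delicate; the threshold $\alpha=\tfrac34$ emerging in case (ii) is precisely the point where the middle term $\mathbb{V}(\mathbb{E})/\mathbb{E}(\mathbb{V})$ (of order $n^{1-2\alpha}$) crosses the first term $\sqrt{\mathbb{V}(\mathbb{V})}/\mathbb{E}(\mathbb{V})$ (of order $n^{-1/2}$), and checking that no cancellation spoils these rates is where the care is needed.
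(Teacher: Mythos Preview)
Your proposal is essentially the paper's own proof: apply Theorem~\ref{T:Kolmogorovboundmixednormal} with $\mathscr{G}=\mathcal{Y}_n$, import the conditional mean/variance formulas and their first moments from \cite{BartoszekSagitov2015}, compute the remaining quantity $\mathbb{V}(\mathbb{V}(\overline{Y}_n\mid\mathcal{Y}_n))$, and substitute. Two small corrections worth flagging. First, the Yule tree stopped just before the $n$th speciation is ultrametric, so every tip sits at the common height $U_n$; hence $\mathbb{E}(\overline{Y}_n\mid\mathcal{Y}_n)=\delta e^{-\alpha U_n}$ (not an average over distinct $T_i$), and the conditional variance takes the compact form $n^{-1}+(1-n^{-1})\,\mathbb{E}[e^{-2\alpha\tau^{(n)}}\mid\mathcal{Y}_n]-e^{-2\alpha U_n}$, which is what makes the moment computations tractable. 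Second, your blanket ``$\mathbb{V}(\mathbb{V})=O(n^{-2})$-type'' is too coarse: the paper finds $\mathbb{V}(\mathbb{V}(\overline{Y}_n\mid\mathcal{Y}_n))\asymp n^{-4\alpha}$ for $\tfrac12<\alpha<\tfrac34$, $\asymp n^{-3}\ln n$ at $\alpha=\tfrac34$, and $\asymp n^{-3}$ for $\alpha>\tfrac34$, and it is this change of regime in $\mathbb{V}(\mathbb{V})$ (coming from $\mathbb{V}[\mathbb{E}(e^{-2\alpha\tau^{(n)}}\mid\mathcal{Y}_n)]$) that produces the threshold --- for $\tfrac12<\alpha<\tfrac34$ all three terms of the bound are of the same order $n^{-2\alpha+1}$, not just the middle one.
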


\begin{thm}\label{T:YOUWasserstein}
Consider the YOU model with $\alpha \ge 1/2$, with the same notation as in Theorem~\ref{T:YOU}.

(i)  If $\alpha=\frac{1}{2}$, then: $d_{W}\bigl(\mathcal{L}\left(\frac{\overline{Y}_{n}-\mu_n}{\sigma_n}\right),\mathrm{N}(0,1)\bigr)= \textnormal{O}(\ln^{-1} n)$ as $n\to\infty$.

(ii) If $\alpha>\frac{1}{2}$, then: $d_{W}\bigl(\mathcal{L}\left(\frac{\overline{Y}_{n}-\mu_n}{\sigma_n}\right),\mathrm{N}(0,1)\bigr) = \begin{cases}
\textnormal{O}(n^{-2\alpha+1}),&\textnormal{$\frac{1}{2}<\alpha<\frac{3}{4}$;}\\
\textnormal{O}(\frac{\ln^{1/4}n}{n^{1/2}}),&\textnormal{$\alpha=\frac{3}{4}$;}\\
\textnormal{O}(n^{-\min(\alpha-1/4,3/4)}),&\textnormal{$\alpha>\frac{3}{4}$,}
\end{cases}$\\
as $n\to\infty$.
\end{thm}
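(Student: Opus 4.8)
\medskip
\noindent\emph{Proof plan.} The plan is to apply Theorem~\ref{T:Wassersteinboundmixednormal} to $X=\overline{Y}_n$, with ``mixing'' $\sigma$-algebra $\mathcal{Y}_n$, the one generated by the pure birth tree. Since the YOU model carries no jumps, conditionally on $\mathcal{Y}_n$ the trait values at the $n$ leaves form a Gaussian vector, so $\overline{Y}_n$ is a linear combination of jointly normal random variables and hence is conditionally normal given $\mathcal{Y}_n$; this, with $\mathbb{E}(\overline{Y}_n^{2})<\infty$, verifies the hypotheses of Theorem~\ref{T:Wassersteinboundmixednormal}, and the centering $\mu_n=\mathbb{E}(\overline{Y}_n)$ and scaling $\sigma_n^{2}=\mathbb{E}(\mathbb{V}(\overline{Y}_n|\mathcal{Y}_n))$ are precisely those occurring there.

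Theorem~\ref{T:Wassersteinboundmixednormal} then reduces the problem to the three numbers $\sigma_n^{2}=\mathbb{E}(\mathbb{V}(\overline{Y}_n|\mathcal{Y}_n))$, $v_n:=\mathbb{V}(\mathbb{V}(\overline{Y}_n|\mathcal{Y}_n))$ and $w_n:=\mathbb{V}(\mathbb{E}(\overline{Y}_n|\mathcal{Y}_n))$, via
\[ d_{W}\Bigl(\mathcal{L}\bigl(\tfrac{\overline{Y}_n-\mu_n}{\sigma_n}\bigr),\mathrm{N}(0,1)\Bigr)\;\le\;\sqrt{\tfrac{2}{\pi}}\,\frac{v_n^{3/4}}{\sigma_n^{3}}+\frac{\sqrt{w_n}\,\sqrt{v_n}}{\sigma_n^{3}}+\frac{w_n}{\sigma_n^{2}}+\sqrt{\tfrac{2}{\pi}}\,\frac{\sqrt{w_n}\,v_n^{1/4}}{\sigma_n^{2}}. \]
These are exactly the three quantities entering the Kolmogorov bound of Theorem~\ref{T:Kolmogorovboundmixednormal}, hence the ones whose closed forms and $n\to\infty$ asymptotics are established for Theorem~\ref{T:YOU}, largely by re-using computations of Bartoszek and Sagitov~\cite{BartoszekSagitov2015}; I would take those over directly. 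Since all $n$ leaves are coeval, $\mathbb{E}(\overline{Y}_n|\mathcal{Y}_n)=\delta\,e^{-\alpha U_n}$ for the random root-to-leaf distance $U_n$, so $w_n=\delta^{2}\mathbb{V}(e^{-\alpha U_n})$ has a closed form in the $b_{m,\cdot}$ and satisfies $w_n\asymp n^{-2\alpha}$ (the leading constant being positive, by log-convexity of $\Gamma$); while $\mathbb{V}(\overline{Y}_n|\mathcal{Y}_n)=n^{-2}\sum_{i,j}\mathrm{Cov}(Y_i,Y_j|\mathcal{Y}_n)$ is a quadratic form whose coefficients are explicit functions of the branch lengths and of the shared ancestral times of leaf pairs, so $\sigma_n^{2}$ and $v_n$ reduce to the first two moments of functionals of tree heights and of numbers of speciation events along lineages. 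The relevant facts, recorded for Theorem~\ref{T:YOU}, are: $\sigma_n^{2}\asymp n^{-1}$ for $\alpha>\tfrac12$ and $\sigma_n^{2}\asymp n^{-1}\ln n$ for $\alpha=\tfrac12$; and $v_n$ decays much faster than the naive $n^{-2}$, with $v_n=\textnormal{O}(n^{-3})$ for $\alpha\geq\tfrac34$ (an extra $\ln n$ at $\alpha=\tfrac34$) and $v_n$ far smaller at $\alpha=\tfrac12$ — the exact decay of $v_n$ in each regime being precisely what pins down the corresponding Kolmogorov rate in Theorem~\ref{T:YOU}.

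Substituting these orders into the displayed bound and keeping the dominant term in each of the regimes $\alpha=\tfrac12$, $\tfrac12<\alpha<\tfrac34$, $\alpha=\tfrac34$, $\alpha>\tfrac34$ finishes the proof. The last two terms, $w_n/\sigma_n^{2}$ and $\sqrt{w_n}\,v_n^{1/4}/\sigma_n^{2}$, already occur in the Kolmogorov bound, so they contribute at most the Kolmogorov rate; the genuinely new terms are $v_n^{3/4}/\sigma_n^{3}$ and $\sqrt{w_n v_n}/\sigma_n^{3}$; and, decisively, Theorem~\ref{T:Wassersteinboundmixednormal} has no analogue of the term $\sqrt{v_n}/\sigma_n^{2}$ that governs the Kolmogorov rate once $\alpha\geq\tfrac34$. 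Hence for $\alpha=\tfrac12$ and for $\tfrac12<\alpha<\tfrac34$ the new terms are of order no larger than the surviving Kolmogorov ones and the rate is unchanged, namely $\textnormal{O}(\ln^{-1}n)$, resp.\ $\textnormal{O}(n^{-2\alpha+1})$, coming from $w_n/\sigma_n^{2}$; at $\alpha=\tfrac34$ the dominant term becomes $\sqrt{w_n}\,v_n^{1/4}/\sigma_n^{2}\asymp n^{-1/2}\ln^{1/4}n$, a factor $\ln^{1/4}n$ smaller than the Kolmogorov bound; and for $\alpha>\tfrac34$ the surviving terms are $v_n^{3/4}/\sigma_n^{3}\asymp n^{-3/4}$ and $\sqrt{w_n}\,v_n^{1/4}/\sigma_n^{2}\asymp n^{-(\alpha-1/4)}$, giving $d_{W}=\textnormal{O}(n^{-\min(\alpha-1/4,\,3/4)})$, as claimed.

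The main obstacle is the one already faced in the proof of Theorem~\ref{T:YOU}: establishing the sharp decay rate of $v_n=\mathbb{V}(\mathbb{V}(\overline{Y}_n|\mathcal{Y}_n))$, which requires controlling fourth-order mixed moments of the dependent tree-height and speciation-count functionals and, in particular, exhibiting the cancellations that make $v_n$ so small at $\alpha=\tfrac12$. Once that is in hand, the remaining work is the bookkeeping above: deciding which of the four terms of Theorem~\ref{T:Wassersteinboundmixednormal} dominates in each $\alpha$-regime.
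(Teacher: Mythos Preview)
Your proposal is correct and follows exactly the paper's approach: apply Theorem~\ref{T:Wassersteinboundmixednormal} with $\mathscr{G}=\mathcal{Y}_n$, re-use the asymptotics for $\sigma_n^2$, $w_n=\mathbb{V}(\mathbb{E}(\overline{Y}_n|\mathcal{Y}_n))$ and $v_n=\mathbb{V}(\mathbb{V}(\overline{Y}_n|\mathcal{Y}_n))$ already established for Theorem~\ref{T:YOU}, and then do the term-by-term bookkeeping in each $\alpha$-regime. One factual slip to correct: at $\alpha=\tfrac12$ the paper's computation (equation~\eqref{E:eqVVYOU}) gives $v_n\asymp n^{-2}$, not ``far smaller'' than $n^{-2}$; the logarithmic gain at $\alpha=\tfrac12$ comes entirely from $\sigma_n^2\asymp n^{-1}\ln n$, and with the correct $v_n\asymp n^{-2}$ your conclusion that the new terms are $O(\ln^{-3/2}n)$ and hence dominated by $w_n/\sigma_n^2\asymp\ln^{-1}n$ still stands.
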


\begin{proof}[Proof of Theorems 3.1--2]
As explained above, the phylogeny is modelled by a pure birth tree, in which each speciation point is binary, and the edge lengths are independent exponentially distributed random variables with the same rate parameter, called the birth rate. Without loss of generality we take $1$ as the birth rate. Then, the time between the $k$th and 
$(k+1)$st speciation event, denoted $T_{k+1}$, is exponentially distributed with rate $(k+1)$, as the minimum
of $(k+1)$ independent rate 1 exponentially distributed random variables; see Fig.~\ref{F:YuleTree}.

There are two key random components to consider: the height of the tree $(U_{n})$ and the time from the present
backwards to the coalescence of a \emph{random} (out of $\binom{n}{2}$ possible) pair of tip species $(\tau^{(n)})$.
These random variables are illustrated in Fig.~\ref{F:YuleTree}, but see also Fig.~A.$8$ in \cite{Bartoszek2014} 
and Figs.~1 and~5 in \cite{Bartoszek2020}. 

\begin{figure}
\begin{center}
\includegraphics[width=0.45\textwidth]{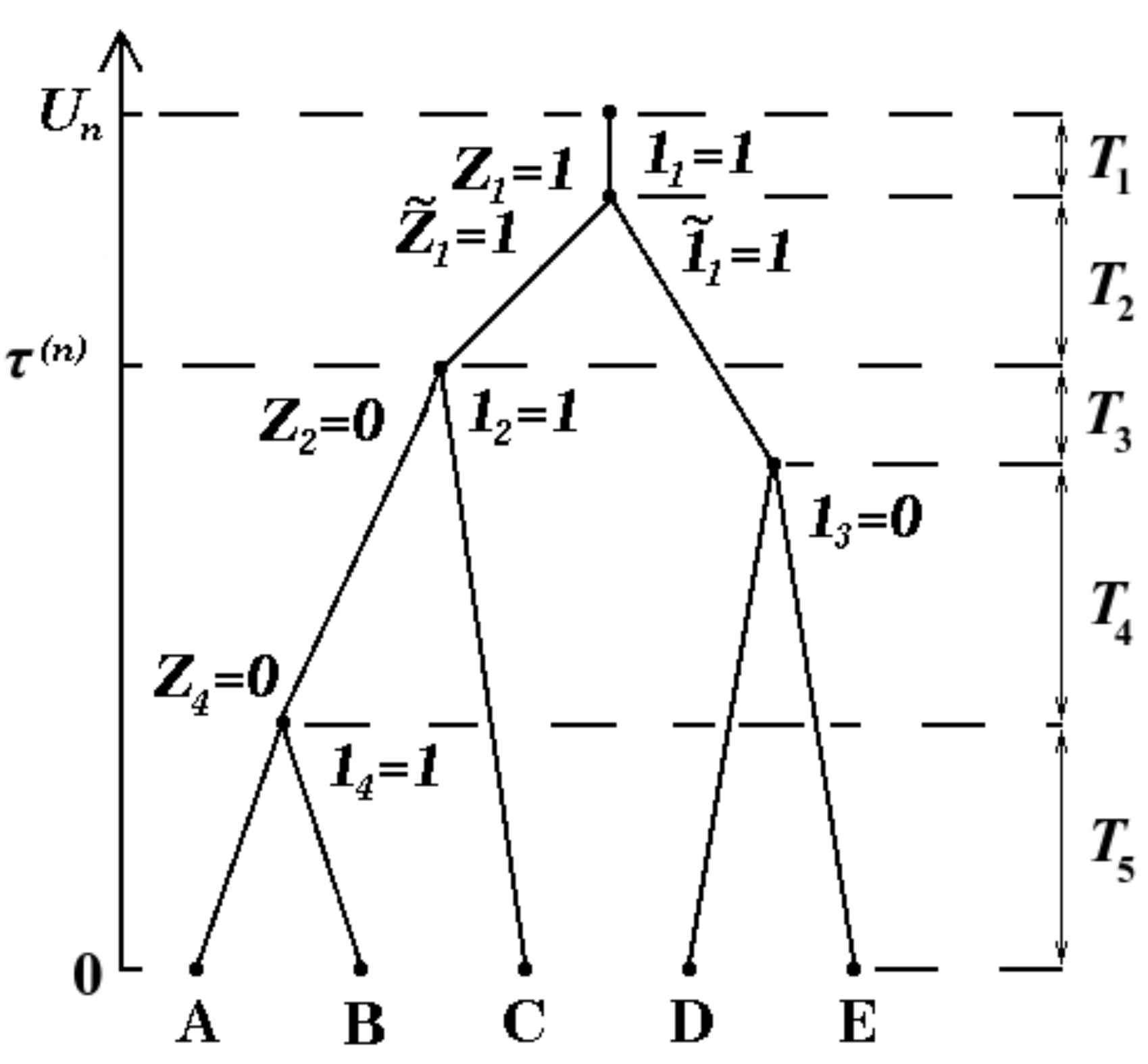}
\caption{
A pure-birth (Yule) tree with the various time components marked on it. A branching OU process, which might also have a jump just after each speciation event (=branching point), evolves on top of the tree. In this example we assume that a jump only takes place just after the first speciation event.--- The values of $\mathbf{1}_{1}$, $\mathbf{1}_{2}$, $\mathbf{1}_{3}$, 
$\mathbf{1}_{4}$, $Z_{1}$, $Z_{2}$ and $Z_{3}$ refer to the situation where node $A$ is randomly sampled.
The $\mathbf{1}_{i}$ random variables tell us if the $i$th speciation event is on the selected lineage,
while the $Z_{i}$ variables tell us if a jump took place on the lineage just after the $i$th speciation event.
As the third speciation event does not lie on the lineage to node $A$, $Z_{3}$ is undefined.
The values of $\tilde{\mathbf{1}}_{1}$, $\tilde{Z}_{1}$ and $\tau^{(n)}$ refer to the situation
where the pair of nodes $(A,C)$ was randomly sampled. As jumps take place after speciation
events the only common jump possibility for this pair is at speciation node $1$. Hence $\tilde{\mathbf{1}}_{i}$, $\tilde{Z}_{i}$ for $i>1$ are undefined.
}\label{F:YuleTree}
\end{center}
\end{figure}

In order to study the properties of the OU (and, in the next section, OU+jumps) process evolving on a tree, we need expressions for the Laplace transforms of the above random objects
that contribute to the mean and variance of the
average of the tip values, $\overline{X}_{n}$. In \cite{BartoszekSagitov2015} the following formul\ae, including the asymptotic
behaviour as $n\to\infty$, are derived (their Lemmata 3 and 4):
\be\label{E:LapUn}
\begin{array}{ll}
\Expectation{e^{-xU_n}} &= b_{n,x}\sim \Gamma(x+1)n^{-x}, \\
\V{e^{-xU_n}} &=  b_{n,2x}-b_{n,x}^2\sim (\Gamma(2x+1)-\Gamma(x+1)^{2})n^{-2x},
\end{array}
\ee
\begin{align}\label{E:Laptaun}
\Expectation{e^{-y\tau^{(n)}}}  =
\left\{
\begin{array}{lll}
\frac{2}{n-1}(H_{n}-1)-\frac{1}{n+1} &\sim\ \ 2n^{-1}\ln n,& y=1,\\
{2-(n+1)(y+1)b_{n,y}\over(n-1)(y-1)} &\sim\ \ {2\over y-1}n^{-1},& y>1,
\end{array}
\right.
\end{align}
\begin{align} \label{E:LapUnmtaun}
\Expectation{e^{-xU_n-y\tau^{(n)}}} &\sim \left\{ 
\begin{array}{ll}
2\Gamma(x+1)n^{-x-1}\ln n,& y=1,\\
{2\Gamma(x+1)\over y-1} n^{-x-1},& y>1,
\end{array}
\right.
\end{align}
as well as the variance of the conditional expectation (cf.~Lemmata $5.1$ in \cite{Bartoszek2020} and $11$ in \cite{BartoszekSagitov2015}): 
\be\label{E:VEtLapTaun}
\V{\Et{e^{-y\tau^{(n)}}}} = 
\left\{
\begin{array}{cc}
O(n^{-2y}) & 0 < y < \frac{3}{2}, \\
O(n^{-3}\ln n) & y =\frac{3}{2}, \\
O(n^{-3}) & y>\frac{3}{2}.
\end{array}
\right.
\ee
We furthermore have (Lemma $8$ in \cite{BartoszekSagitov2015}):
\be\label{E:EVYnYOU}
\begin{array}{ll}
\Et{\overline{Y}_{n}}  &=\delta e^{-\alpha U_{n}}, \\
\Vart{\overline{Y}_{n}}  &=  n^{-1}+(1-n^{-1})\Et{e^{-2\alpha \tau^{(n)}}} - e^{-2\alpha U_{n}},
\end{array}
\ee
\be\label{E:EVtYnYOUasympt}
\begin{array}{ll}
\Expectation{\Vart{\overline{Y}_{n}}} =
 n^{-1} + &(1-n^{-1})\Expectation{e^{-2\alpha \tau^{(n)}}} - \Expectation{e^{-2\alpha U_{n}}}\\
&\sim \left\{
\begin{array}{cc}
2n^{-1}\ln n, & \alpha=1/2,\\
\frac{2\alpha+1}{2\alpha-1}n^{-1},
& \alpha>1/2
\end{array}
\right.
\end{array}
\ee
and (Lemma $4$ in \cite{BartoszekSagitov2015})
\be \label{E:VEtYnYOUasympt}
\V{\Et{\overline{Y}_{n}}} = \V{\delta e^{-\alpha U_{n}}} \sim \delta^{2}(\Gamma(2\alpha+1)-\Gamma(\alpha+1)^{2})n^{-2\alpha}.
\ee
It remains to consider $\mathbb{V}(\mathbb{V}(\overline{Y}_{n} \vert \mathcal{Y}_{n}))$. Using \eqref{E:EVYnYOU}, we obtain:
\[\V{\Vart{\overline{Y}_{n}}}  =  \V{(1-n^{-1})\Et{e^{-2\alpha \tau^{(n)}}} - e^{-2\alpha U_{n}}}\]
\[= (1-n^{-1})^{2}\V{\Et{e^{-2\alpha \tau^{(n)}}}} + \V{e^{-2\alpha U_{n}}}
-2(1-n^{-1})\covx{\Et{e^{-2\alpha \tau^{(n)}}}}{e^{-2\alpha U_{n}}}\]
\[= (1-n^{-1})^{2}\V{\Et{e^{-2\alpha \tau^{(n)}}}} + \V{e^{-2\alpha U_{n}}}\]
\[- 2(1-n^{-1})\left(\Expectation{e^{-2\alpha(\tau^{(n)}+U_{n})}}-\Expectation{e^{-2\alpha \tau^{(n)}}}\Expectation{e^{-2\alpha U_{n}}}\right).\]
We consider the $\alpha \ge 1/2$ regime. 
As normality of the limiting distribution was not shown for $\alpha <1/2$ in~\cite{BartoszekSagitov2015} (and should not be expected, see Remark~\ref{R:alphalessthanahalf} below), there will be no gain from presenting long formul\ae\ for that case.
Using \eqref{E:LapUn}, \eqref{E:Laptaun} and \eqref{E:LapUnmtaun} (see also Lemmata $3$ and $4$ in \cite{BartoszekSagitov2015} 
and Lemma $5.1$ in \cite{Bartoszek2020}), and, when considering $\V{\Et{e^{-2\alpha \tau^{(n)}}}}$, using the approximation 
for large $n$
\be\label{E:sumirasympt}
\sum\limits_{i=k}^{n}i^{r} \sim
\left \{
\begin{array}{ll}
\frac{1}{r+1} (n^{r+1}-k^{r+1}), & r > -1 \\
\ln n, & r = -1 \\
\frac{1}{r+1} (k^{r+1} - n^{r+1}), & r < -1
\end{array}
\right.
\ee
due from
$$
\int\limits_{k+1}^{n-1} x^{r} \ud x \le \sum\limits_{i=k}^{n}i^{r} \le \int\limits_{k-1}^{n+1} x^{r} \ud x,
$$
we obtain the following asymptotic behaviour as $n\to\infty$:
\be\label{E:eqVVYOU}
\V{\Vart{\overline{Y}_{n}}}
\sim\left\{
\begin{array}{ll}

8\zeta_{2}n^{-2} + (\Gamma(3)-\Gamma(2))^{2}n^{-2},
& \alpha = \frac{1}{2}, \\~\\

\begin{split}
&\frac{32\alpha^{2}}{2-2\alpha}\zeta_{4-4\alpha}n^{-4\alpha}\\
&+ (\Gamma(4\alpha+1)-\Gamma(2\alpha+1))^{2}n^{-4\alpha},
\end{split} & \frac{1}{2} < \alpha < \frac{3}{4}, \\~\\
36n^{-3}\ln n,
& \alpha  = \frac{3}{4}, \\~\\

\frac{32\alpha^{2}}{(2\alpha-1)(4\alpha-3)(4\alpha-2)}n^{-3},
& \frac{3}{4} < \alpha < 1, \\~\\ 

16n^{-3},
& \alpha = 1, \\~\\

\frac{32\alpha^{2}}{(4\alpha-3)(4\alpha-2)(2\alpha-1)}n^{-3},
& 1 < \alpha,
\end{array}
\right.
\ee
where $\zeta_{r}$ is the Riemann zeta function, 
$$
\zeta_{r}=\sum\limits_{k=1}^{\infty} k^{-r}.
$$
Denote now the leading constant of $\mathbb{E}(\mathbb{V}(\overline{Y}_{n} \vert \mathcal{Y}_{n}))$
as $C^{EV}_{a,b}$, of $\mathbb{V}(\mathbb{E}(\overline{Y}_{n} \vert \mathcal{Y}_{n}))$ as $C^{VE}$,
of $\mathbb{V}(\mathbb{V}(\overline{Y}_{n} \vert \mathcal{Y}_{n}))$ as $C^{VV}_{a,b}$, 
where $a,b$ is the interval where $\alpha$ belongs to. If $a=b$, then we just write $C^{VV}_{a}$.
We drop in the notation the dependence of the constant on $\alpha$ and $X(0)$, 
treating them as implied. For $\alpha=\frac{1}{2}$, Theorem~\ref{T:Kolmogorovboundmixednormal} gives:
\be \label{E:dKYOUcrit}
\begin{array}{l}
d_{K}\bigl(\mathcal{L}\left(\frac{\overline{Y}_{n}-\mu_n}{\sigma_n}\right),\mathrm{N}(0,1)\bigr)\\
\le \frac{\sqrt{\mathbb{V}\bigl(\mathbb{V}(\overline{Y}_{n}|\mathcal{Y}_n)\bigr)}}{\mathbb{E}\bigl(\mathbb{V}(\overline{Y}_{n}|\mathcal{Y}_n)\bigr)} + \frac{\mathbb{V}\bigl(\mathbb{E}(\overline{Y}_{n}|\mathcal{Y}_n)\bigr)}{\mathbb{E}\bigl(\mathbb{V}(\overline{Y}_{n}|\mathcal{Y}_n)\bigr)} + \sqrt{\frac{2}{\pi}}\frac{\sqrt{\mathbb{V}\bigl(\mathbb{E}(\overline{Y}_{n}|\mathcal{Y}_n)\bigr)}\mathbb{V}\bigl(\mathbb{V}(\overline{Y}_{n}|\mathcal{Y}_n)\bigr)^{1/4}}{\mathbb{E}\bigl(\mathbb{V}(\overline{Y}_{n}|\mathcal{Y}_n)\bigr)}
\\
\lesssim
\frac{\sqrt{C^{VV}_{1/2}}}{C^{EV}_{1/2}}\ln^{-1}n
+
\frac{C^{VE}}{C^{EV}_{1/2}}\ln^{-1}n
+
\sqrt{\frac{2}{\pi}}\frac{\sqrt{C^{VE}}(C^{VV}_{1/2})^{1/4}}{C^{EV}_{1/2}}
\ln^{-1}n
\end{array}
\ee
where $\mu_n$ and $\sigma_n^2$, as well as their asymptotic behaviour as $n\to\infty$, 
can be obtained from \eqref{E:LapUn}, \eqref{E:Laptaun}, \eqref{E:EVYnYOU}, 
and \eqref{E:EVtYnYOUasympt}. It follows immediately from \eqref{E:Kolmogorovscaleinvariance} 
and Remark~\ref{R:Kolmogorovboundtwonormals} 
that $(\frac{n}{\ln n})^{1/2}\,\overline{Y}_{n}\ \xrightarrow{\ d\ }\ \textnormal{N}(0,2)$ as $n\to\infty$. 
Analogously, for $\alpha>\frac{1}{2}$, Theorem~\ref{T:Kolmogorovboundmixednormal} gives:

\be \label{E:dKYOUfast}
\begin{array}{l}
d_{K}\bigl(\mathcal{L}\left(\frac{\overline{Y}_{n}-\mu_n}{\sigma_n}\right),\mathrm{N}(0,1)\bigr)\\
\lesssim
\left\{
\begin{array}{ll}
\frac{\sqrt{C^{VV}_{1/2,3/4}}}{C^{EV}_{1/2,\infty}}n^{-2\alpha+1}
+ \frac{C^{VE}}{C^{EV}_{1/2,\infty}}n^{-2\alpha+1}
+ \sqrt{\frac{2}{\pi}}\frac{\sqrt{C^{VE}}(C^{VV}_{1/2,3/4})^{1/4}}{C^{EV}_{1/2,\infty}}n^{-2\alpha+1}
& \frac{1}{2} < \alpha < \frac{3}{4}, \\~\\ 
\frac{\sqrt{C^{VV}_{3/4}}}{C^{EV}_{1/2,\infty}}\frac{\ln^{1/2}n}{n^{1/2}}
+ \frac{C^{VE}}{C^{EV}_{1/2,\infty}}n^{-1/2}
+ \sqrt{\frac{2}{\pi}}\frac{\sqrt{C^{VE}}(C^{VV}_{3/4})^{1/4}}{C^{EV}_{1/2,\infty}}\frac{\ln^{1/4}n}{n^{1/2}}
& \alpha = \frac{3}{4}, \\~\\ 
\frac{\sqrt{C^{VV}_{3/4,1}}}{C^{EV}_{1/2,\infty}}n^{-1/2}
+ \frac{C^{VE}}{C^{EV}_{1/2,\infty}}n^{-2\alpha + 1}
+ \sqrt{\frac{2}{\pi}}\frac{\sqrt{C^{VE}}(C^{VV}_{3/4,1})^{1/4}}{C^{EV}_{1/2,\infty}}n^{-\alpha+1/4}
& \frac{3}{4} < \alpha < 1, \\~\\ 
\frac{\sqrt{C^{VV}_{1}}}{C^{EV}_{1/2,\infty}}n^{-1/2}
+ \frac{C^{VE}}{C^{EV}_{1/2,\infty}}n^{-1}
+ \sqrt{\frac{2}{\pi}}\frac{\sqrt{C^{VE}}(C^{VV}_{1})^{1/4}}{C^{EV}_{1/2,\infty}}n^{-3/4}
& \alpha = 1, \\~\\ 
\frac{\sqrt{C^{VV}_{1,\infty}}}{C^{EV}_{1/2,\infty}}n^{-1/2}
+ \frac{C^{VE}}{C^{EV}_{1/2,\infty}}n^{-2\alpha + 1}
+ \sqrt{\frac{2}{\pi}}\frac{\sqrt{C^{VE}}(C^{VV}_{1,\infty})^{1/4}}{C^{EV}_{1/2,\infty}}n^{-\alpha+1/4}
& 1 < \alpha.
\end{array}
\right.
\end{array}
\ee
We obtain $\mu_n$ and $\sigma_n^2$, their asymptotic behaviour as $n\to\infty$, and the fact that $n^{1/2}\,\overline{Y}_{n}\ \xrightarrow{\ d\ }\ \textnormal{N}(0,\frac{2\alpha+1}{2\alpha-1})$ as $n\to\infty$, just as in the previous case. 

For the Wasserstein distance, the first term on the right hand side of \eqref{E:dKYOUcrit} should be replaced by:
\be \label{E:dWYOUcrit}
\begin{array}{l}
\sqrt{\frac{2}{\pi}}\,\frac{\mathbb{V}\bigl(\mathbb{V}(\overline{Y}_{n}|\mathcal{Y}_n)\bigr)^{3/4}}{\mathbb{E}\bigl(\mathbb{V}(\overline{Y}_{n}|\mathcal{Y}_n)\bigr)^{3/2}} + \frac{\sqrt{\mathbb{V}\bigl(\mathbb{E}(\overline{Y}_{n}|\mathcal{Y}_n)\bigr)}\sqrt{\mathbb{V}\bigl(\mathbb{V}(\overline{Y}_{n}|\mathcal{Y}_n)\bigr)}}{\mathbb{E}\bigl(\mathbb{V}(\overline{Y}_{n}|\mathcal{Y}_n)\bigr)^{3/2}}\\
\lesssim \sqrt{\frac{2}{\pi}}\,\frac{(C^{VV}_{1/2})^{3/4}}{(C^{EV}_{1/2})^{3/2}}\ln^{-3/2}n + \frac{\sqrt{C^{VE}}\sqrt{C^{VV}_{1/2}}}{(C^{EV}_{1/2})^{3/2}}\ln^{-3/2}n.
\end{array}
\ee
and the first term on the right hand side of \eqref{E:dKYOUfast} should be replaced by:
\be
\begin{array}{l}
\sqrt{\frac{2}{\pi}}\,\frac{\mathbb{V}\bigl(\mathbb{V}(\overline{Y}_{n}|\mathcal{Y}_n)\bigr)^{3/4}}{\mathbb{E}\bigl(\mathbb{V}(\overline{Y}_{n}|\mathcal{Y}_n)\bigr)^{3/2}} + \frac{\sqrt{\mathbb{V}\bigl(\mathbb{E}(\overline{Y}_{n}|\mathcal{Y}_n)\bigr)}\sqrt{\mathbb{V}\bigl(\mathbb{V}(\overline{Y}_{n}|\mathcal{Y}_n)\bigr)}}{\mathbb{E}\bigl(\mathbb{V}(\overline{Y}_{n}|\mathcal{Y}_n)\bigr)^{3/2}}\\
\lesssim
\left\{
\begin{array}{ll}
\sqrt{\frac{2}{\pi}}\,\frac{(C^{VV}_{1/2,3/4})^{3/4}}{(C^{EV}_{1/2,\infty})^{3/2}}n^{-3\alpha + 3/2} + \frac{\sqrt{C^{VE}}\sqrt{C^{VV}_{1/2,3/4}}}{(C^{EV}_{1/2,\infty})^{3/2}}n^{-3\alpha + 3/2}
& \frac{1}{2} < \alpha < \frac{3}{4}, \\~\\ 
\sqrt{\frac{2}{\pi}}\,\frac{(C^{VV}_{3/4})^{3/4}}{(C^{EV}_{1/2,\infty})^{3/2}}\frac{\ln^{3/4}n}{n^{3/4}} + \frac{\sqrt{C^{VE}}\sqrt{C^{VV}_{3/4}}}{(C^{EV}_{1/2,\infty})^{3/4}}\frac{\ln^{1/2}n}{n^{3/4}}
& \alpha = \frac{3}{4}, \\~\\ 
\sqrt{\frac{2}{\pi}}\,\frac{(C^{VV}_{3/4,1})^{3/4}}{(C^{EV}_{1/2,\infty})^{3/2}}n^{-3/4} + \frac{\sqrt{C^{VE}}\sqrt{C^{VV}_{3/4,1}}}{(C^{EV}_{1/2,\infty})^{3/2}}n^{-\alpha}
& \frac{3}{4} < \alpha < 1, \\~\\ 
\sqrt{\frac{2}{\pi}}\,\frac{(C^{VV}_{1})^{3/4}}{(C^{EV}_{1/2,\infty})^{3/2}}n^{-3/4} + \frac{\sqrt{C^{VE}}\sqrt{C^{VV}_{1}}}{(C^{EV}_{1/2,\infty})^{3/2}}n^{-1}
& \alpha = 1, \\~\\ 
\sqrt{\frac{2}{\pi}}\,\frac{(C^{VV}_{1,\infty})^{3/4}}{(C^{EV}_{1/2,\infty})^{3/2}}n^{-3/4} + \frac{\sqrt{C^{VE}}\sqrt{C^{VV}_{1,\infty}}}{(C^{EV}_{1/2,\infty})^{3/2}}n^{-\alpha}
& 1 < \alpha,
\end{array}
\right.
\end{array}
\ee
\end{proof}

We illustrate the bounds from \eqref{E:dKYOUcrit} and \eqref{E:dKYOUfast} 
and for the YOUj model in Fig.~\ref{F:YOUYOUjbound}.

\begin{rem} \label{R:alphalessthanahalf}
The theorems presented in this section do not give information about the case $\alpha < 1/2$. However, one can strongly suspect that the limit will not be normal in this case. By considering higher moments of the limiting distribution, it was shown in Remark $3.14$ in \cite{AdamczakMilos2015} that when stopping the YOU model at a fixed time (the number of tips being random) for $\alpha < 1/2$, the limit is not normal. Unfortunately, when stopping just before the $n$th speciation event, the approach in \cite{BartoszekSagitov2015} does not allow for easy derivation of the higher moments, in order to reach the 
same conclusion as in \cite{AdamczakMilos2015}.
\end{rem}

\begin{figure}[!ht]
\begin{center}
\includegraphics[width=0.45\textwidth]{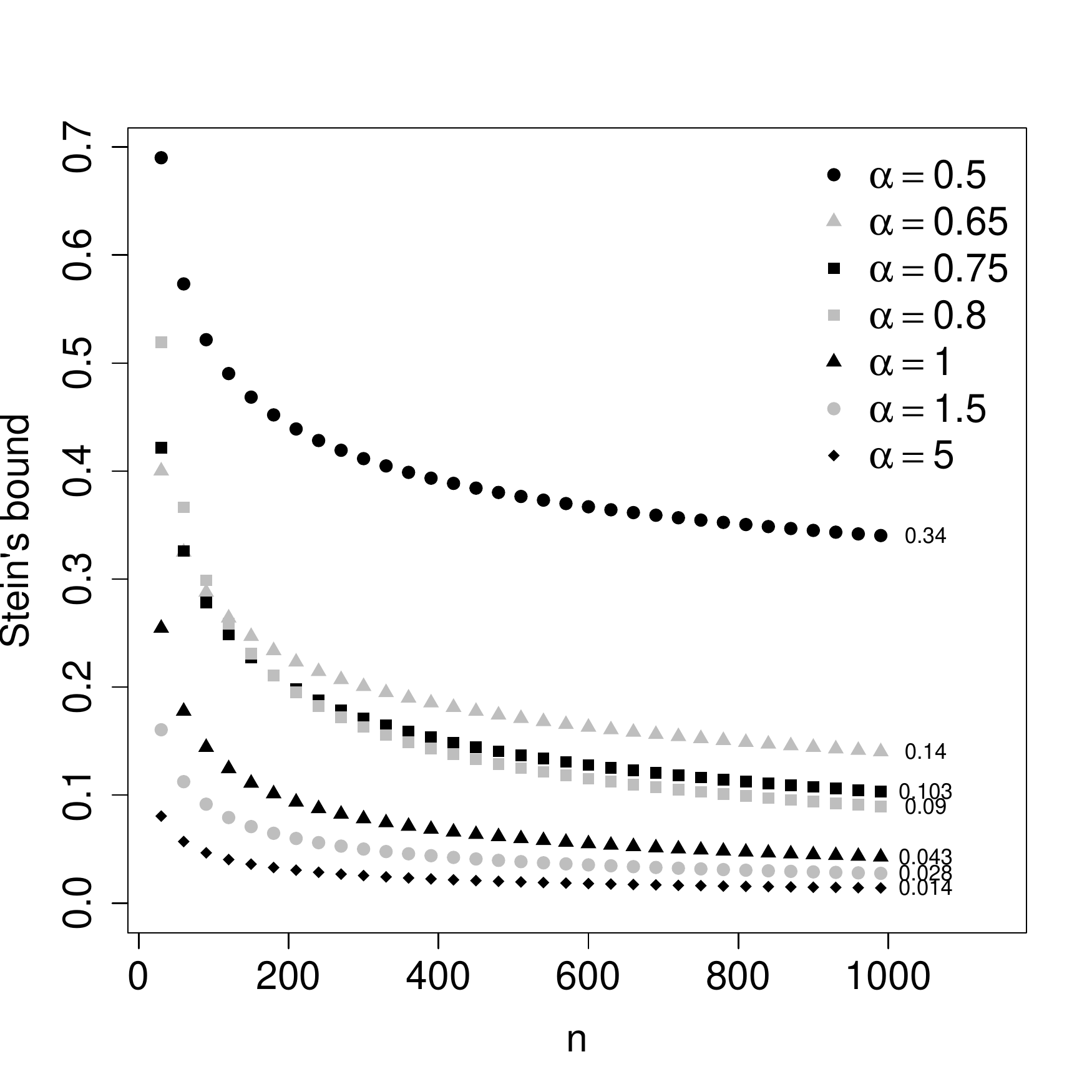}
\includegraphics[width=0.45\textwidth]{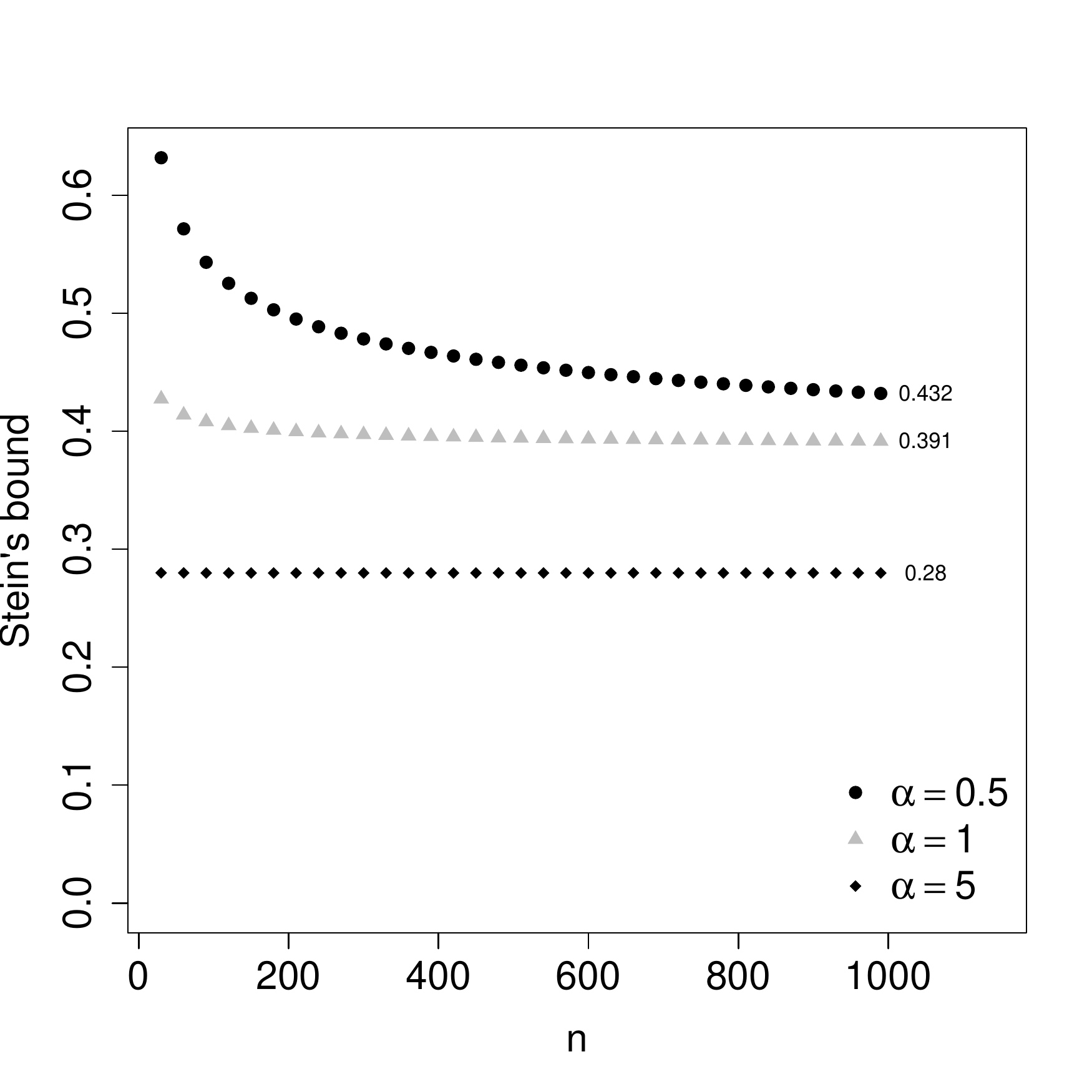}
\caption{
Left: Illustration of the bounds from \eqref{E:dKYOUcrit} and \eqref{E:dKYOUfast}. For the graph we chose $\sigma_{a}^{2}=1$ and 
$X(0)=(2\alpha)^{-1/2}$.
Right: Illustration of the bounds on the Kolmogorov distance 
for the YOUj model. For the graphs we chose $X(0)=(2\alpha)^{-1/2}$, 
$p=1/2$ and $\sigma_{a}^{2}=\sigma_{c}^{2}=1$. For $\alpha=1/2$ we use the bound of \eqref{E:dKYOUjconstaeq05}, while for $\alpha>1/2$ 
we are in the non-convergent regime, so the bounds come from
explicitly calculating the asymptotic constant in \eqref{E:dKYOUjcrit}.
}\label{F:YOUYOUjbound}
\end{center}
\end{figure}

\section{The Yule-Ornstein-Uhlenbeck model with jumps}\label{sec:applicationtoyoujumpsmodel}
The new feature of the YOUj model, as compared to the YOU model, is that a normally distributed jump with mean $0$ 
may or may not take place in the trait value immediately after a speciation event. The jumps occur independently of 
one another and of the OU process, but the probability of a jump, and the variance of the jump, may depend on the number of the speciation event: with speciation event number $i=1,\ldots,n$, we associate a jump probability $p_{i}$ and jump variance $\sigma_{c,i}^{2}$. If the jump probabilities and variances are constant, we write: $(p_i,\sigma_{c,i}^{2})\equiv (p,\sigma_{c}^{2})$.

The key problem is that one needs to keep careful track of the jumps
that take place at speciation events and how the ``mean-reversion'' of the OU process
part causes their effect to be smoothed out along a lineage. We keep the notation defined in Section~\ref{sec:applicationtoyoumodel}, except that we now denote by $\mathcal{Y}_{n}$ the $\sigma$-algebra
that contains information on the whole Yule tree and the jumps' locations, i.e. after which speciation events
did a jump take place. We now introduce the concept of convergence with density $1$.
\begin{defn}
A subset $E \subset \mathbb{N}$ of positive integers is said to have density $0$ (see e.g.~Petersen \cite{KPet1983}) if 
$$
\lim\limits_{n \to \infty}\frac{1}{n}\sum\limits_{k=0}^{n-1}I_{E}(k) =0,
$$
where $I_{E}(\cdot)$ is the indicator function of the set $E$.
\end{defn}

\begin{defn}
A sequence $a_{n}$ converges to $0$ with density $1$ if there exists a subset $E\subset \mathbb{N}$ of density $0$ such that 
$$
\lim\limits_{n \to \infty,n \notin E}a_{n} =0.
$$
\end{defn}

\begin{thm}\label{T:YOUj}
Consider the YOUj model with $\alpha \ge 1/2$. Let $\overline{X}_{n}$ be the average value of the traits at the $n$ leaves, let $\overline{Y}_{n} = \overline{X}_{n}\sqrt{\frac{2\alpha}{\sigma_{a}^{2}}}$, and let $\delta = X(0)\sqrt{\frac{2\alpha}{\sigma_{a}^{2}}}$. Let also $\mu_n = \mathbb{E}(\overline{Y}_{n})$ and $\sigma_n^2=\Expectation{\Vart{\overline{Y}_{n}}}$.

(i)  If $\alpha=1/2$, and $(p_i,\sigma_{c,i}^{2})\equiv (p,\sigma_{c}^{2})$, then: $d_{K}\bigl(\mathcal{L}\left(\frac{\overline{Y}_{n}-\mu_n}{\sigma_n}\right),\mathrm{N}(0,1)\bigr) = \textnormal{O}(\ln^{-\frac{1}{2}} n)$ as $n\to\infty$, where $\mu_n = \delta b_{n,\alpha}$. Moreover, $(\frac{n}{\ln n})^{1/2}\,\mu_n\to 0$ and $\frac{n}{\ln n}\sigma_n^2\to 2+\frac{4p}{\sigma_a^{2}}\sigma_{c}^{2}$ as $n\to\infty$, so $(\frac{n}{\ln n})^{1/2}\,\overline{Y}_{n}\ \xrightarrow{\ d\ }\ \textnormal{N}(0,2+\frac{4p}{\sigma_a^{2}}\sigma_{c}^{2})$ as $n\to\infty$.

(ii) If $\alpha>1/2$, and $(p_i,\sigma_{c,i}^{2})\equiv (1,\sigma_{c}^{2})$, then the asymptotics as $n\to\infty$ for $d_{K}\bigl(\mathcal{L}\left(\frac{\overline{Y}_{n}-\mu_n}{\sigma_n}\right),\mathrm{N}(0,1)\bigr)$ is the same as in Theorem~\ref{T:YOU} ($ii$), and $\mu_n = \delta b_{n,\alpha}$. Moreover, $n^{1/2}\,\mu_n\to 0$ and $n\sigma_n^2\to\frac{2\alpha+1}{2\alpha-1}(1 + \frac{2p}{\sigma_a^{2}}\sigma_{c}^{2})$ as $n\to\infty$, so $n^{1/2}\,\overline{Y}_{n}\ \xrightarrow{\ d\ }\ \textnormal{N}(0,\frac{2\alpha+1}{2\alpha-1}(1 + \frac{2p}{\sigma_a^{2}}\sigma_{c}^{2}))$ as $n\to\infty$.

(iii) If $\alpha>1/2$, and the sequence $p_{n}\sigma_{c,n}^{4}$ 
is bounded and converges to $0$ with density $1$, then: $d_{K}\bigl(\mathcal{L}\left(\frac{\overline{Y}_{n}-\mu_n}{\sigma_n}\right),\mathrm{N}(0,1)\bigr) \to 0$ as $n\to\infty$.
\end{thm}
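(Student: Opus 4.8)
The plan is to mirror the proof of Theorem~\ref{T:YOU}: condition on $\mathcal{Y}_n$, which now records the Yule tree together with the jump locations, and apply Theorem~\ref{T:Kolmogorovboundmixednormal}. Conditionally on $\mathcal{Y}_n$ each tip value is a linear combination of the Gaussian OU contributions and the independent Gaussian jump sizes (whose positions, but not magnitudes, are $\mathcal{Y}_n$-measurable), so $\overline{Y}_n$ is conditionally normal and the hypotheses of Theorem~\ref{T:Kolmogorovboundmixednormal} hold. First I would observe that the mean-zero jumps leave the conditional expectation unchanged, $\mathbb{E}(\overline{Y}_n\mid\mathcal{Y}_n)=\delta e^{-\alpha U_n}$, exactly as in \eqref{E:EVYnYOU}. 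Hence in all three cases $\mu_n=\delta b_{n,\alpha}$, and $\mathbb{V}(\mathbb{E}(\overline{Y}_n\mid\mathcal{Y}_n))$ is given by \eqref{E:VEtYnYOUasympt}; the asymptotics $(\tfrac{n}{\ln n})^{1/2}\mu_n\to0$ for $\alpha=\tfrac12$ and $n^{1/2}\mu_n\to0$ for $\alpha>\tfrac12$ then follow from $b_{n,\alpha}\sim\Gamma(\alpha+1)n^{-\alpha}$ in \eqref{E:LapUn}.

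Next I would decompose $\mathbb{V}(\overline{Y}_n\mid\mathcal{Y}_n)$ as the pure-OU conditional variance $V_n^{\mathrm{OU}}$ of \eqref{E:EVYnYOU} plus a jump part which, following the moment computations of \cite{Bartoszek2020}, takes the form $\sum_i\mathbf{1}_i\,\sigma_{c,i}^2\,w_i$, where $\mathbf{1}_i$ is the ($\mathcal{Y}_n$-measurable) indicator of a jump at speciation event $i$ and $w_i\ge0$ is a tree-measurable weight encoding the squared relative size of the clade below event $i$ and the OU attenuation $e^{-2\alpha(U_n-s_i)}$ of that jump toward the tips, $s_i$ denoting the time of event $i$. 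Taking expectations gives $\sigma_n^2=\mathbb{E}(\mathbb{V}(\overline{Y}_n\mid\mathcal{Y}_n))$ as the expected OU variance \eqref{E:EVtYnYOUasympt} plus $\sum_i p_i\sigma_{c,i}^2\,\mathbb{E}(w_i)$. For constant parameters this is evaluated with \eqref{E:LapUn}--\eqref{E:LapUnmtaun} and the Euler--Maclaurin estimate \eqref{E:sumirasympt}, producing the stated limits $\tfrac{n}{\ln n}\sigma_n^2\to2+\tfrac{4p}{\sigma_a^2}\sigma_c^2$ for $\alpha=\tfrac12$ and $n\sigma_n^2\to\tfrac{2\alpha+1}{2\alpha-1}(1+\tfrac{2p}{\sigma_a^2}\sigma_c^2)$ for $\alpha>\tfrac12$.

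The main obstacle is the variance of the conditional variance. The law of total variance, conditioning first on the tree and using the conditional independence of the jump indicators, gives
\[\mathbb{V}\bigl(\mathbb{V}(\overline{Y}_n\mid\mathcal{Y}_n)\bigr)=\sum_i p_i(1-p_i)\sigma_{c,i}^4\,\mathbb{E}(w_i^2)+\mathbb{V}\bigl(V_n^{\mathrm{OU}}+\sum_i p_i\sigma_{c,i}^2\,w_i\bigr),\]
where the second summand combines the pure-OU variance \eqref{E:eqVVYOU}, a tree-fluctuation term in the weights, and OU--jump cross terms controlled by Cauchy--Schwarz. The asymptotics of the weight sums $\sum_i\mathbb{E}(w_i)$, $\sum_i\mathbb{E}(w_i^2)$ and $\mathbb{V}(\sum_i w_i)$ are precisely the objects analysed in \cite{Bartoszek2020}, which I would import through \eqref{E:LapUn}--\eqref{E:LapUnmtaun}, \eqref{E:VEtLapTaun} and \eqref{E:sumirasympt}. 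Two features organise the three cases: the jump-location term carries the fourth power $\sigma_{c,i}^4$, and since $p_i\sigma_{c,i}^2=\sqrt{p_i}\,\sqrt{p_i\sigma_{c,i}^4}\le\sqrt{p_i\sigma_{c,i}^4}$, boundedness and density-$1$ decay of $p_n\sigma_{c,n}^4$ transfer automatically to $p_n\sigma_{c,n}^2$ and thereby control every jump term at once.

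It then remains to insert these asymptotics into the three terms of Theorem~\ref{T:Kolmogorovboundmixednormal}, as in \eqref{E:dKYOUcrit}--\eqref{E:dKYOUfast}. In the critical case (i) the jump contribution raises $\mathbb{V}(\mathbb{V}(\overline{Y}_n\mid\mathcal{Y}_n))$ to order $n^{-2}\ln n$, one logarithmic factor above the jump-free model, while $\sigma_n^2\sim(2+\tfrac{4p}{\sigma_a^2}\sigma_c^2)n^{-1}\ln n$; the first term $\sqrt{\mathbb{V}(\mathbb{V})}/\sigma_n^2$ is then of order $\ln^{-1/2}n$ and dominates the other two (of orders $\ln^{-1}n$ and $\ln^{-3/4}n$), giving $\mathrm{O}(\ln^{-1/2}n)$. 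In case (ii), with $p\equiv1$, the jump-location term vanishes and $\mathbb{V}(\mathbb{V}(\overline{Y}_n\mid\mathcal{Y}_n))$ keeps the same power of $n$ as \eqref{E:eqVVYOU}, so the three terms have the same orders as in \eqref{E:dKYOUfast} and the rate coincides with that of Theorem~\ref{T:YOU}(ii). Case (iii) requires only $d_K\to0$: since $\mathbb{V}(\mathbb{E})\sim n^{-2\alpha}$ while $\sigma_n^2\gtrsim c\,n^{-1}$ (the nonnegative jump part only adds to the OU part $\sim\tfrac{2\alpha+1}{2\alpha-1}n^{-1}$), the middle term is $\lesssim n^{-2\alpha+1}\to0$, and because the third term equals $\sqrt{2/\pi}$ times the geometric mean of the first two it suffices to show $\sqrt{\mathbb{V}(\mathbb{V})}/\sigma_n^2\to0$. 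For this I would note that each jump contribution to $\mathbb{V}(\mathbb{V})$ is a weighted sum of the bounded factors $p_i(1-p_i)\sigma_{c,i}^4$ and of products of $p_i\sigma_{c,i}^2$ against nonnegative tree-weights whose totals are of the jump-free orders; since these factors vanish off a density-$0$ set and that set contributes negligibly by boundedness, a Cesàro/density-$1$ averaging yields $\mathbb{V}(\mathbb{V})=o(\sigma_n^4)$, whence $d_K\to0$. Finally the weak-convergence statements in (i) and (ii) follow as in Theorem~\ref{T:YOU}, combining scale invariance \eqref{E:Kolmogorovscaleinvariance} and Remark~\ref{R:Kolmogorovboundtwonormals} with the limits for $\sigma_n^2$ and the rates $(\tfrac{n}{\ln n})^{1/2}\mu_n\to0$ (resp.\ $n^{1/2}\mu_n\to0$). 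I expect the bookkeeping in $\mathbb{V}(\mathbb{V}(\overline{Y}_n\mid\mathcal{Y}_n))$---in particular the extra logarithmic factor in (i) and the density-$1$ averaging in (iii)---to be the crux.
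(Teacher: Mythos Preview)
Your proposal is correct and follows essentially the same approach as the paper: condition on the tree-plus-jump-locations $\sigma$-algebra $\mathcal{Y}_n$, apply Theorem~\ref{T:Kolmogorovboundmixednormal}, and feed in the asymptotics of the jump contributions from \cite{Bartoszek2020}. The paper parametrizes the jump part of $\mathbb{V}(\overline{Y}_n\mid\mathcal{Y}_n)$ through two random-lineage sums $\sum_i\sigma_{c,i}^2\phi_i^{\ast}$ and $\sum_i\sigma_{c,i}^2\phi_i$ (Lemma~6.1 of \cite{Bartoszek2020}) rather than your single weighted sum, and bounds $\mathbb{V}(\mathbb{V}(\overline{Y}_n\mid\mathcal{Y}_n))$ by a crude Cauchy--Schwarz on its four summands rather than your law-of-total-variance split, but these are presentational differences and the handling of each of the three cases (including the vanishing of the $p(1-p)$ term in~(ii) and the density-$1$ averaging in~(iii), which the paper outsources to Corollaries~5.4 and~5.7 of \cite{Bartoszek2020}) matches yours.
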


\begin{thm}\label{T:YOUjWasserstein}
Consider the YOUj model with $\alpha \ge 1/2$, with the same notation as in Theorem~\ref{T:YOUj}.

(i)  If $\alpha=1/2$, and $(p_i,\sigma_{c,i}^{2})\equiv (p,\sigma_{c}^{2})$, then: $d_{K}\bigl(\mathcal{L}\left(\frac{\overline{Y}_{n}-\mu_n}{\sigma_n}\right),\mathrm{N}(0,1)\bigr) = \textnormal{O}(\ln^{-3/4} n)$ as $n\to\infty$.

(ii) If $\alpha>1/2$, and $(p_i,\sigma_{c,i}^{2})\equiv (1,\sigma_{c}^{2})$, then the asymptotics as $n\to\infty$ for $d_{K}\bigl(\mathcal{L}\left(\frac{\overline{Y}_{n}-\mu_n}{\sigma_n}\right),\mathrm{N}(0,1)\bigr)$ is the same as in Theorem~\ref{T:YOUWasserstein} ($ii$). 

(iii) If $\alpha>1/2$, and the sequence $p_{n}\sigma_{c,n}^{4}$ 
is bounded and converges to $0$ with density $1$, then: $d_{W}\bigl(\mathcal{L}\left(\frac{\overline{Y}_{n}-\mu_n}{\sigma_n}\right),\mathrm{N}(0,1)\bigr) \to 0$ as $n\to\infty$.
\end{thm}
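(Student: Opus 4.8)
The plan is to invoke Theorem~\ref{T:Wassersteinboundmixednormal} with $X=\overline{Y}_{n}$ and $\mathscr{G}=\mathcal{Y}_{n}$, reading off the rates from the very same moment asymptotics that underlie the proof of Theorem~\ref{T:YOUj}; the argument parallels the passage from Theorem~\ref{T:Wassersteinboundmixednormal} to Theorem~\ref{T:YOUWasserstein} in the jump-free case. It is convenient to set
\[
R_{1}=\frac{\mathbb{V}\bigl(\mathbb{E}(\overline{Y}_{n}|\mathcal{Y}_{n})\bigr)}{\mathbb{E}\bigl(\mathbb{V}(\overline{Y}_{n}|\mathcal{Y}_{n})\bigr)},\qquad R_{2}=\frac{\mathbb{V}\bigl(\mathbb{V}(\overline{Y}_{n}|\mathcal{Y}_{n})\bigr)}{\mathbb{E}\bigl(\mathbb{V}(\overline{Y}_{n}|\mathcal{Y}_{n})\bigr)^{2}}.
\]
A direct rewriting of the bounds shows that Theorem~\ref{T:Kolmogorovboundmixednormal} gives $d_{K}\le R_{2}^{1/2}+R_{1}+\sqrt{2/\pi}\,R_{1}^{1/2}R_{2}^{1/4}$, while Theorem~\ref{T:Wassersteinboundmixednormal} gives $d_{W}\le \sqrt{2/\pi}\,R_{2}^{3/4}+R_{1}^{1/2}R_{2}^{1/2}+R_{1}+\sqrt{2/\pi}\,R_{1}^{1/2}R_{2}^{1/4}$. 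Relative to the Kolmogorov case already treated in Theorem~\ref{T:YOUj}, the only genuinely new ingredient is the term $R_{2}^{3/4}$; the term $R_{1}^{1/2}R_{2}^{1/2}$ turns out never to be the dominant one in any of the regimes below.

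The first step is to record the three relevant moments for the YOUj model. Since the jumps have mean $0$ and, conditionally on $\mathcal{Y}_{n}$, are independent of the OU component, the conditional mean is unaffected, $\mathbb{E}(\overline{Y}_{n}|\mathcal{Y}_{n})=\delta e^{-\alpha U_{n}}$, so $\mathbb{V}(\mathbb{E}(\overline{Y}_{n}|\mathcal{Y}_{n}))$ is of order $n^{-2\alpha}$ as in \eqref{E:EVYnYOU} and \eqref{E:VEtYnYOUasympt}; the moments $\mathbb{E}(\mathbb{V}(\overline{Y}_{n}|\mathcal{Y}_{n}))$ and $\mathbb{V}(\mathbb{V}(\overline{Y}_{n}|\mathcal{Y}_{n}))$ are those computed in the proof of Theorem~\ref{T:YOUj}, into which the jumps enter through weighted sums of $p_{i}\sigma_{c,i}^{2}$ and $p_{i}\sigma_{c,i}^{4}$ over $i=1,\ldots,n$. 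For case~(i), with constant $(p,\sigma_{c}^{2})$, one has $\mathbb{E}(\mathbb{V}(\overline{Y}_{n}|\mathcal{Y}_{n}))$ of order $n^{-1}\ln n$, $\mathbb{V}(\mathbb{E}(\overline{Y}_{n}|\mathcal{Y}_{n}))=\textnormal{O}(n^{-1})$ and $\mathbb{V}(\mathbb{V}(\overline{Y}_{n}|\mathcal{Y}_{n}))$ of order $n^{-2}\ln n$, so $R_{1}=\textnormal{O}(\ln^{-1}n)$ and $R_{2}$ is of order $\ln^{-1}n$, whence $R_{2}^{3/4}$, which is of order $\ln^{-3/4}n$, controls the bound and $d_{W}=\textnormal{O}(\ln^{-3/4}n)$. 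For case~(ii), with $(p_{i},\sigma_{c,i}^{2})\equiv(1,\sigma_{c}^{2})$, the jumps preserve the polynomial and logarithmic orders of all three moments (only the leading constants change), so $R_{1}$ and $R_{2}$ have the same orders as in the proof of Theorem~\ref{T:YOUWasserstein}(ii), and feeding them into the $d_{W}$-bound above reproduces verbatim the three rate regimes stated there. For case~(iii), the proof of Theorem~\ref{T:YOUj}(iii) already shows that, under the hypothesis that $p_{n}\sigma_{c,n}^{4}$ is bounded and converges to $0$ with density $1$, both $R_{1}\to 0$ and $R_{2}\to 0$; since the $d_{W}$-bound above is a finite sum of products of nonnegative powers of $R_{1}$ and $R_{2}$, it follows that $d_{W}\to 0$.

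The substitutions themselves are routine. The real work, done in the proof of Theorem~\ref{T:YOUj} and merely quoted here, lies in the asymptotic analysis of $\mathbb{E}(\mathbb{V}(\overline{Y}_{n}|\mathcal{Y}_{n}))$ and especially of $\mathbb{V}(\mathbb{V}(\overline{Y}_{n}|\mathcal{Y}_{n}))$ in the presence of jumps: the conditional variance now depends on the random jump pattern, and the OU-decayed contribution of a jump to $\overline{Y}_{n}$ is entangled with the random tree heights $U_{n}$ and coalescence times $\tau^{(n)}$, so the relevant expectations and variances demand careful bookkeeping (and, at $\alpha=1/2$, produce the logarithmic factor in $\mathbb{V}(\mathbb{V}(\overline{Y}_{n}|\mathcal{Y}_{n}))$). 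The main obstacle specific to case~(iii) is to show that a weighted average of $p_{i}\sigma_{c,i}^{4}$ over the speciation events, with weights concentrated near $i=n$, still vanishes when $p_{n}\sigma_{c,n}^{4}$ is bounded and converges to $0$ with density $1$; this is precisely the Ces\`aro-type statement about density-$0$ sets used in the proof of Theorem~\ref{T:YOUj}.
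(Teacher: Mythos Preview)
Your proposal is correct and takes essentially the same approach as the paper: both apply Theorem~\ref{T:Wassersteinboundmixednormal} to $X=\overline{Y}_n$, $\mathscr{G}=\mathcal{Y}_n$, and substitute the moment asymptotics already established in the proof of Theorem~\ref{T:YOUj}, noting that relative to the Kolmogorov case only the first term of the bound changes (to $R_2^{3/4}$ plus $R_1^{1/2}R_2^{1/2}$, in your notation). Your observation that $R_1^{1/2}R_2^{1/2}$ is never the dominant term, and your case-by-case identification of the leading contribution, match the paper's computations in \eqref{E:dWYOUjconstaeq05} and the surrounding discussion.
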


\begin{proof}[Proof of Theorems 4.1--2]
In addition to the random quantities defined in Section~\ref{sec:applicationtoyoumodel}, we have to consider
two more random components of the tree,
the speciation events on a \emph{random} (out of $n$ possible) lineage, and
the speciation events common (i.e. on the path from the origin
of the tree to the most recent common ancestor) 
for a \emph{random} (out of $\binom{n}{2}$ possible) pair of tip species. We define $\mathbf{1}_{i}$ as a binary random variable indicating that the tree's $i$th speciation event
is present on our randomly chosen lineage, 
$\tilde{\mathbf{1}}_{i}$ as a binary random variable indicating that the tree's $i$th speciation event
is present the path from the root to the most recent common ancestor of our randomly sampled pair of tips, 
$Z_{i}$ as a binary random variable indicating that a jump took place just after the
tree's $i$th speciation event on our randomly chosen lineage
and 
$\tilde{Z}_{i}$ as a binary random variable indicating that a jump took place just after the
tree's $i$th speciation event on the path from the root to the most recent common ancestor of our randomly sampled
pair of tips.
For illustration of these random variables see Fig.~\ref{F:YuleTree}.

Furthermore, we define the two following sequences of random variables:
\[\phi^{\ast}_{i}:=Z_{i}e^{-2\alpha(T_{n}+\ldots+T_{i+1})}\Et{\mathbf{1}_{i}}; \quad\phi_{i}:=\tilde{Z}_{i}\tilde{\mathbf{1}}_{i}e^{-2\alpha(T_{n}+\ldots+T_{i+1})} \qquad\forall i=1,\ldots,n-1.\]
We can recognize that $\phi^{\ast}_{i}$ and $\phi_{i}$ 
capture how the effect of each (potential) jump will be modified before the end 
of the randomly selected lineage is reached. The first one quantifies the effects that jumps will have on a randomly selected tip species, while the second quantifies the effects that jumps have on the covariance
between a random pair of tip species. 
Intuitively speaking, a random event at distance (in time) $t$ away from the point of interest, is under the OU process
discounted by a factor of $e^{-\alpha t}$, implying that the contribution
of its variance will be discounted by $e^{-2\alpha t}$. 

Recall that with each speciation event, $i=1,\ldots,n$, we associate the jump probability $p_{i}$ and jump variance $\sigma_{c,i}^{2}$, and that the jumps are normally distributed with mean 0. In the case when $(p_i,\sigma_{c,i}^{2})\equiv (p,\sigma_{c}^{2})$, we have (the $\alpha \ge 1/2$ regime in the proof of Theorem $3.2$ in \cite{Bartoszek2014}):

\be\label{E:Ephiastsum}
\Expectation{\sum\limits_{i=1}^{n-1}\phi^{\ast}_{i}} = 
\frac{2p}{2\alpha}(1-(1+2\alpha)b_{n,2\alpha})\sim \frac{2p}{2\alpha}(1-\Gamma(2+2\alpha)n^{-2\alpha}),
\ee

\be\label{E:Ephisum}
\Expectation{\sum\limits_{i=1}^{n-1}\phi_{i}} = 
\left\{
\begin{array}{cll}
\frac{4p}{n-1}(H_{n}-\frac{5n-1}{2(n+1)}) &\sim\ \ 4pn^{-1}\ln n, & \alpha =1/2\\
\frac{2p}{2\alpha}\frac{(2-(2\alpha+1)(2\alpha n -2\alpha+2)b_{n,2\alpha})}{(n-1)(2\alpha-1)} &\sim\ \ \frac{4p}{2\alpha(2\alpha-1)}n^{-1}, & \alpha >1/2.
\end{array}
\right.
\ee
In the case when $p_{n}\sigma_{c,n}^{4}\to 0$ 
with density $1$ as $n\to\infty$, then, by Corollaries 
$5.4$ and $5.7$ in \cite{Bartoszek2020}, as $n\to\infty$, 
for $\alpha=1/2$,
\be\label{E:phiphiastasymptdens1eq05}
\begin{array}{l}
(n\ln^{-1} n) \V{\sum\limits_{i=1}^{n-1}\sigma_{c,i}^{2}\phi^{\ast}_{i}} \to 0,~~~~ 
(n^{2}\ln^{-1} n) \V{\sum\limits_{i=1}^{n-1}\sigma_{c,i}^{2}\phi_{i}} \to 0, 
\end{array}
\ee
and for $\alpha>1/2$
\be\label{E:phiphiastasymptdens1gt05}
\begin{array}{l}
n \V{\sum\limits_{i=1}^{n-1}\sigma_{c,i}^{2}\phi^{\ast}_{i}} \to 0,~~~~ 
n^{2} \V{\sum\limits_{i=1}^{n-1}\sigma_{c,i}^{2}\phi_{i}} \to 0. 
\end{array}
\ee
For the conditional mean and variance of $\overline{Y}_{n}$, the following 
formul\ae\ are provided in \cite{Bartoszek2020}, Lemma $6.1$: 
\be\label{E:EVYnYOUj}
\begin{array}{ll}
\Et{\overline{Y}_{n}}  &=\delta e^{-\alpha U_{n}}, \\
\Vart{\overline{Y}_{n}}  &=  n^{-1}+(1-n^{-1})\Et{e^{-2\alpha \tau^{(n)}}} - e^{-2\alpha U_{n}}
\\ &+n^{-1}\frac{2\alpha}{\sigma_a^{2}}\sum\limits_{i=1}^{n-1}\sigma_{c,i}^{2}\phi^{\ast}_{i} 
+(1-n^{-1})\frac{2\alpha}{\sigma_a^{2}}\sum\limits_{i=1}^{n-1}\sigma_{c,i}^{2}\phi_{i}. 
\end{array}
\ee
In the case when $(p_i,\sigma_{c,i}^{2})\equiv (p,\sigma_{c}^{2})$,
using \eqref{E:EVtYnYOUasympt}, \eqref{E:Ephiastsum} and \eqref{E:Ephisum}, we obtain

\be\label{E:EVtYnYOUjasympt}
\Expectation{\Vart{\overline{Y}_{n}}} \sim
\left\{
\begin{array}{cc}
2\left(1+\frac{2p}{\sigma_a^{2}}\sigma_{c}^{2}\right)n^{-1}\ln n, & \alpha=1/2, \\
\frac{2\alpha+1}{2\alpha-1}\left(1 + \frac{2p}{\sigma_a^{2}}\sigma_{c}^{2}\right)n^{-1}, & \alpha >1/2,
\end{array}
\right.
\ee
and as in \eqref{E:VEtYnYOUasympt}, we get:
$$
\V{\Et{\overline{Y}_{n}}} = \V{\delta e^{-\alpha U_{n}}} \sim \delta^{2}(\Gamma(2\alpha+1)-\Gamma(\alpha+1)^{2})n^{-2\alpha}.
$$
It remains to consider $\mathbb{V}(\mathbb{V}(\overline{Y}_{n} \vert \mathcal{Y}_{n}))$.
We will use Cauchy-Schwarz to obtain an upper bound

\be\label{E:VVtYnYOUjbound}
\begin{array}{ll}
\V{\Vart{\overline{Y}_{n}}} & \le 4\left(\V{\Et{e^{-2\alpha \tau^{(n)}}}} + 
\V{e^{-2\alpha U_{n}}} 
\right. \\& \left.
+n^{-2}(\frac{2\alpha}{\sigma_a^{2}})^{2}\V{\sum\limits_{i=1}^{n-1}\sigma_{c,i}^2\phi^{\ast}_{i}}
+(\frac{2\alpha}{\sigma_a^{2}})^{2}\V{\sum\limits_{i=1}^{n-1}\sigma_{c,i}^2\phi_{i}}\right).
\end{array}
\ee
As before, we first consider the case when $(p_i,\sigma_{c,i}^{2})\equiv (p,\sigma_{c}^{2})$. We look at $\V{\sum\limits_{i=1}^{n-1}\phi^{\ast}_{i}}$ by considering in more detail the elements I, II and III inside the proof of Lemma $5.3$ in \cite{Bartoszek2020}, 
to obtain:

\be\label{E:Vphiastsumasmypt}
\V{\sum\limits_{i=1}^{n-1}\phi^{\ast}_{i}} \sim 
\left\{
\begin{array}{cc}
n^{-1}\ln n, & \alpha = 1/4,\\
\frac{4p^{2}}{4\alpha-1}n^{-1}, & \alpha> 1/4.
\end{array}
\right.
\ee
In the same fashion, we look at $\V{\sum\limits_{i=1}^{n-1}\phi_{i}}$ by 
considering in more detail element III inside the proof of Lemma $5.5$ in \cite{Bartoszek2020} and using \eqref{E:sumirasympt}

\be\label{E:Vphisumasmypt}
\V{\sum\limits_{i=1}^{n-1}\phi_{i}} \sim 
\left\{
\begin{array}{cc}
16p(1-p)n^{-2}\ln n, & \alpha =1/2,\\
\frac{32p(1-p)}{(4\alpha)(4\alpha-1)(4\alpha-2)}n^{-2}, & \alpha>1/2.
\end{array}
\right.
\ee
The other elements I, II, IV and V for $\alpha \ge 1/2$ converge faster to $0$,
hence they do not contribute to the leading asymptotic behaviour. Using \eqref{E:LapUn}, \eqref{E:VEtLapTaun}, \eqref{E:Vphiastsumasmypt} and \eqref{E:Vphisumasmypt}, we obtain the bound:

\be\label{E:VVtYnYOUjasympt}
\V{\Vart{\overline{Y}_{n}}} \lesssim
\left\{
\begin{array}{cc}
4(\frac{2\alpha}{\sigma_a^{2}})^{2}\sigma_{c}^{4}16p(1-p)n^{-2}\ln n, & \alpha=1/2 \\
4(\frac{2\alpha}{\sigma_a^{2}})^{2}\sigma_{c}^{4}\frac{32p(1-p)}{(4\alpha)(4\alpha-1)(4\alpha-2)}n^{-2}, & \alpha>1/2 .
\end{array}
\right.
\ee

We denote, just as in Section~\ref{sec:applicationtoyoumodel}, the leading constant of $\mathbb{E}(\mathbb{V}(\overline{Y}_{n} \vert \mathcal{Y}_{n}))$
as $C^{EV}_{a,b}$, of $\mathbb{V}(\mathbb{E}(\overline{Y}_{n} \vert \mathcal{Y}_{n}))$ as $C^{VE}$,
of $\mathbb{V}(\mathbb{V}(\overline{Y}_{n} \vert \mathcal{Y}_{n}))$ as $C^{VV}_{a,b}$, 
where $a,b$ is the interval where $\alpha$ belongs to. If $a=b$, then we just write $C^{VV}_{a}$. If $\alpha=1/2$ and $(p_i,\sigma_{c,i}^{2})\equiv (p,\sigma_{c}^{2})$, Theorem~\ref{T:Kolmogorovboundmixednormal} gives:
\be \label{E:dKYOUjconstaeq05}
\begin{array}{l}
d_{K}\bigl(\mathcal{L}\left(\frac{\overline{Y}_{n}-\mu_n}{\sigma_n}\right),\mathrm{N}(0,1)\bigr)\\
\le \frac{\sqrt{\mathbb{V}\bigl(\mathbb{V}(\overline{Y}_{n}|\mathcal{Y}_n)\bigr)}}{\mathbb{E}\bigl(\mathbb{V}(\overline{Y}_{n}|\mathcal{Y}_n)\bigr)} + \frac{\mathbb{V}\bigl(\mathbb{E}(\overline{Y}_{n}|\mathcal{Y}_n)\bigr)}{\mathbb{E}\bigl(\mathbb{V}(\overline{Y}_{n}|\mathcal{Y}_n)\bigr)} + \sqrt{\frac{2}{\pi}}\frac{\sqrt{\mathbb{V}\bigl(\mathbb{E}(\overline{Y}_{n}|\mathcal{Y}_n)\bigr)}\mathbb{V}\bigl(\mathbb{V}(\overline{Y}_{n}|\mathcal{Y}_n)\bigr)^{1/4}}{\mathbb{E}\bigl(\mathbb{V}(\overline{Y}_{n}|\mathcal{Y}_n)\bigr)}
\\
\lesssim
\frac{\sqrt{C^{VV}_{1/2}}}{C^{EV}_{1/2}}\ln^{-1/2}n
+
\frac{C^{VE}}{C^{EV}_{1/2}}\ln^{-1}n
+
\sqrt{\frac{2}{\pi}}\frac{\sqrt{C^{VE}}(C^{VV}_{1/2})^{1/4}}{C^{EV}_{1/2}}
\ln^{-3/4}n
\end{array}
\ee
where $\mu_n$ and $\sigma_n^2$, as well as their asymptotic behaviour as $n\to\infty$, can be obtained from \eqref{E:LapUn}, \eqref{E:EVYnYOUj}, and \eqref{E:EVtYnYOUjasympt}. Just as in Section~\ref{sec:applicationtoyoumodel}, it follows that $(\frac{n}{\ln n})^{1/2}\,\overline{Y}_{n}\ \xrightarrow{\ d\ }\ \textnormal{N}(0,2+\frac{4p}{\sigma_a^{2}}\sigma_{c}^{2})$ as $n\to\infty$. For the Wasserstein distance, the first term on the right hand side of \eqref{E:dKYOUjconstaeq05} should be replaced by:
\be \label{E:dWYOUjconstaeq05}
\begin{array}{l}
\sqrt{\frac{2}{\pi}}\,\frac{\mathbb{V}\bigl(\mathbb{V}(\overline{Y}_{n}|\mathcal{Y}_n)\bigr)^{3/4}}{\mathbb{E}\bigl(\mathbb{V}(\overline{Y}_{n}|\mathcal{Y}_n)\bigr)^{3/2}} + \frac{\sqrt{\mathbb{V}\bigl(\mathbb{E}(\overline{Y}_{n}|\mathcal{Y}_n)\bigr)}\sqrt{\mathbb{V}\bigl(\mathbb{V}(\overline{Y}_{n}|\mathcal{Y}_n)\bigr)}}{\mathbb{E}\bigl(\mathbb{V}(\overline{Y}_{n}|\mathcal{Y}_n)\bigr)^{3/2}}\\
\lesssim \sqrt{\frac{2}{\pi}}\,\frac{(C^{VV}_{1/2})^{3/4}}{(C^{EV}_{1/2})^{3/2}}\ln^{-3/4}n + \frac{\sqrt{C^{VE}}\sqrt{C^{VV}_{1/2}}}{(C^{EV}_{1/2})^{3/2}}\ln^{-1}n.
\end{array}
\ee
If $\alpha>1/2$ and $(p_i,\sigma_{c,i}^{2})\equiv (p,\sigma_{c}^{2})$, where $p<1$, Theorem~\ref{T:Kolmogorovboundmixednormal} gives:
\be \label{E:dKYOUjcrit}
\begin{array}{l}
d_{K}\bigl(\mathcal{L}\left(\frac{\overline{Y}_{n}-\mu_n}{\sigma_n}\right),\mathrm{N}(0,1)\bigr)\\
\le \frac{\sqrt{\mathbb{V}\bigl(\mathbb{V}(\overline{Y}_{n}|\mathcal{Y}_n)\bigr)}}{\mathbb{E}\bigl(\mathbb{V}(\overline{Y}_{n}|\mathcal{Y}_n)\bigr)} + \frac{\mathbb{V}\bigl(\mathbb{E}(\overline{Y}_{n}|\mathcal{Y}_n)\bigr)}{\mathbb{E}\bigl(\mathbb{V}(\overline{Y}_{n}|\mathcal{Y}_n)\bigr)} + \sqrt{\frac{2}{\pi}}\frac{\sqrt{\mathbb{V}\bigl(\mathbb{E}(\overline{Y}_{n}|\mathcal{Y}_n)\bigr)}\mathbb{V}\bigl(\mathbb{V}(\overline{Y}_{n}|\mathcal{Y}_n)\bigr)^{1/4}}{\mathbb{E}\bigl(\mathbb{V}(\overline{Y}_{n}|\mathcal{Y}_n)\bigr)}
\\
\lesssim
\frac{\sqrt{C^{VV}_{1/2,\infty}}}{C^{EV}_{1/2,\infty}} + \frac{C^{VE}}{C^{EV}_{1/2,\infty}}n^{-2\alpha + 1} + \sqrt{\frac{2}{\pi}}\frac{\sqrt{C^{VE}}(C^{VV}_{1/2,\infty})^{1/4}}{C^{EV}_{1/2,\infty}}n^{-\alpha + 1/2}.
\end{array}
\ee
The bound does not converge to 0 as $n\to\infty$. The same is true for the Wasserstein distance, where the first term on the right hand side of \eqref{E:dKYOUjcrit} should be replaced by:
\be \label{E:dWYOUjcrit}
\begin{array}{l}
\sqrt{\frac{2}{\pi}}\,\frac{\mathbb{V}\bigl(\mathbb{V}(\overline{Y}_{n}|\mathcal{Y}_n)\bigr)^{3/4}}{\mathbb{E}\bigl(\mathbb{V}(\overline{Y}_{n}|\mathcal{Y}_n)\bigr)^{3/2}} + \frac{\sqrt{\mathbb{V}\bigl(\mathbb{E}(\overline{Y}_{n}|\mathcal{Y}_n)\bigr)}\sqrt{\mathbb{V}\bigl(\mathbb{V}(\overline{Y}_{n}|\mathcal{Y}_n)\bigr)}}{\mathbb{E}\bigl(\mathbb{V}(\overline{Y}_{n}|\mathcal{Y}_n)\bigr)^{3/2}}\\
\lesssim \sqrt{\frac{2}{\pi}}\,\frac{(C^{VV}_{1/2,\infty})^{3/4}}{(C^{EV}_{1/2,\infty})^{3/2}} + \frac{\sqrt{C^{VE}}\sqrt{C^{VV}_{1/2,\infty}}}{(C^{EV}_{1/2,\infty})^{3/2}}n^{-\alpha + 1/2}.
\end{array}
\ee
However, if $p=1$, the leading term in \eqref{E:VVtYnYOUjbound} vanishes, which implies the convergence to 0 in part $(ii)$ of Theorem~\ref{T:YOUj}. In order to obtain the rate of convergence, we need to look at lower order terms. They turn out to be the same as for $\V{\Et{e^{-2\alpha \tau^{(n)}}}}$, since in the $\alpha\ge 1/2$ regime all the other terms converge to 0 just as fast 
(parts I, IV, V of Lemma $5.5$ in \cite{Bartoszek2020}) 
or faster (cf.~Lemmata $5.3$, $5.5$ in \cite{Bartoszek2020}). 
Using the convergence rates presented in \eqref{E:VEtLapTaun}, \eqref{E:VEtYnYOUasympt}, and \eqref{E:EVtYnYOUjasympt}, we obtain part $(ii)$ of Theorems~\ref{T:YOUj} and \ref{T:YOUjWasserstein}.

Finally, if $p_{n}\sigma_{c,n}^{4}$ are bounded and converge to $0$ with density $1$, then by \eqref{E:phiphiastasymptdens1gt05} we obtain
$$
\begin{array}{l}
n^2 \V{\sum\limits_{i=1}^{n-1}\sigma_{c,i}^{2}\phi_{i}} \to 0\quad\textnormal{as $n\to\infty$,}
\end{array}
$$
which implies that $n^{2}\V{\Vart{\overline{Y}_{n}}}\to 0$ as $n\to\infty$, by \eqref{E:VVtYnYOUjbound}. This in turn entails convergence of both distances to 0 as $n\to\infty$, but without any information on the rate. This proves part $(iii)$ of Theorems~\ref{T:YOUj} and \ref{T:YOUjWasserstein}.
\end{proof}

\begin{rem}
In the original arXiv preprint (ArXiv accession 1602.05189) for \cite{Bartoszek2020}, it was stated that convergence to normality
in the $\alpha \ge 1/2$ regime will only take place if $\sigma_{c,n}^{4}p_{n} \to 0$ with density $1$
and is bounded. However, in \eqref{E:dKYOUjconstaeq05} above we can see that in the critical
case, $\alpha=1/2$, convergence to normality will hold even if $(p_i,\sigma_{c,i}^{2})\equiv (p,\sigma_{c}^{2})$. 
\end{rem}

\begin{rem}
The condition $p_{n}\sigma_{c,n}^{4} \to 0$ with density $1$ in Theorem~\ref{T:YOUj} can be slightly relaxed. 
Essentially the same results (with possibly different bounds) will hold if $(1-p_{n})p_{n}\sigma_{c,n}^{4} \to 0$ 
with density $1$ with additional assumptions on the jump effects on a randomly chosen lineage and for a random pair of 
sampled lineages (see Theorem~$4.6$ in \cite{Bartoszek2020}). However, introducing this here would require a significant 
amount of additional heavy notation, for no gain in the actual application of Stein's method to the YOUj model.
\end{rem}

\section{Appendix}
\begin{thm}\label{T:Kolmogorovlowerboundmixednormal}
Let $X$ be a real valued random variable such that $\mathbb{E}(X^2)<\infty$, and let $\mathscr{G}$ be a $\sigma$-algebra such that the regular conditional distribution of $X$ given $\mathscr{G}$ is normal. Define $\mu=\mathbb{E}(X)$, $\sigma^2 = \mathbb{E}(\mathbb{V}(X|\mathscr{G}))$, and
\[\kappa(x) = (\sigma^2-x)\Bigl(\Bigl(\frac{\sigma^2}{\sigma^2 + x}\Bigr)^{3/2} - \frac{1}{2^{3/2}}\Bigr) \qquad\forall x\geq 0.\]
If the asymptotic behaviour of $X$ is such that $\sigma^{-2}\mathbb{E}\bigl((\mu-\mathbb{E}(X|\mathscr{G}))^4\bigr)$ and $\sigma^{-2}\mathbb{E}\bigl((\sigma^2-\mathbb{V}(X|\mathscr{G}))_+(\mu-\mathbb{E}(X|\mathscr{G}))^2\bigr)$ converge to 0 faster than $\mathbb{V}(\mathbb{E}(X|\mathscr{G}))\Bigl[ = \mathbb{E}\bigl((\mu-\mathbb{E}(X|\mathscr{G}))^2\bigr)\Bigr]$, and $\sigma^{-2}\mathbb{E}\bigl(|\sigma^2-\mathbb{V}(X|\mathscr{G})|(\mu-\mathbb{E}(X|\mathscr{G}))^2\bigr)$ converges to 0 faster than $\mathbb{E}(\kappa(\mathbb{V}(X|\mathscr{G})))$, then,
\[d\Bigl(\mathscr{L}\bigl(\frac{X-\mu}{\sigma}\bigr),\textnormal{N}(0,1))\Bigr) \geq \frac{\bigl||T_1(X)| - |T_2(X)|\bigr|}{C\sigma^2},\]
where: $(i)$ either $d=d_K$ and $C = \int_{-\infty}^\infty|2x^3-5x|e^{-x^2/2}dx$, or $d=d_W$ and $C = \max_{x\in\mathbb{R}}|2x^3-5x|e^{-x^2/2}$; $(ii)$ $|T_1(X)| \asymp \mathbb{V}(\mathbb{E}(X|\mathscr{G}))$ and $|T_2(X)| \sim \mathbb{E}\bigl(\kappa(\mathbb{V}(X|\mathscr{G}))\bigr)$. Moreover, $\mathbb{E}\bigl(\kappa(\mathbb{V}(X|\mathscr{G}))\bigr) \leq \frac{27}{8}\sigma^{-2}\mathbb{E}((\sigma^2-\mathbb{V}(X|\mathscr{G}))^2)$.
\end{thm}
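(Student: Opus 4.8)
The plan is to follow the Barbour--Hall lower-bound strategy~\cite{BarbourHall1984}: since both $d_K$ and $d_W$ are suprema over generating classes, it suffices to evaluate the relevant functional at \emph{one} well-chosen test function and read off an inequality. I would take $\psi(x)=-(2x^2-1)e^{-x^2/2}$, which is bounded, absolutely continuous, vanishes at $\pm\infty$, and has $\psi'(x)=(2x^3-5x)e^{-x^2/2}$; in particular $\psi$ is Lipschitz with constant $\|\psi'\|_\infty=\max_{x}|2x^3-5x|e^{-x^2/2}$. Write $W=(X-\mu)/\sigma$ and $Z\sim\textnormal{N}(0,1)$. Using $\psi(x)=\int_{-\infty}^x\psi'(u)\,du$ and Fubini's theorem to swap the order of integration, one gets $\mathbb{E}(\psi(W))-\mathbb{E}(\psi(Z))=-\int_{-\infty}^{\infty}\bigl(F_W(u)-\Phi(u)\bigr)\psi'(u)\,du$, so $|\mathbb{E}(\psi(W))-\mathbb{E}(\psi(Z))|\le C\,d_K\bigl(\mathscr{L}(W),\textnormal{N}(0,1)\bigr)$ with $C=\int_{-\infty}^{\infty}|2x^3-5x|e^{-x^2/2}\,dx$; and since $\psi/\|\psi'\|_\infty\in\mathcal{H}_1$, likewise $|\mathbb{E}(\psi(W))-\mathbb{E}(\psi(Z))|\le C\,d_W\bigl(\mathscr{L}(W),\textnormal{N}(0,1)\bigr)$ with $C=\|\psi'\|_\infty$. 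These are exactly the two constants in the statement, so $d\ge|\mathbb{E}(\psi(W))-\mathbb{E}(\psi(Z))|/C$ in either case, and the problem reduces to understanding this numerator.

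Next I would evaluate the numerator in closed form by conditioning on $\mathscr{G}$. Conditionally, $W\sim\textnormal{N}\bigl((M-\mu)/\sigma,\ V/\sigma^2\bigr)$ with $M=\mathbb{E}(X\mid\mathscr{G})$ and $V=\mathbb{V}(X\mid\mathscr{G})$, and carrying out the Gaussian integral $\mathbb{E}\bigl((2Y^2-1)e^{-Y^2/2}\bigr)$ for $Y\sim\textnormal{N}(a,b)$ yields $\mathbb{E}(\psi(W)\mid\mathscr{G})=\Psi(m,v)$, where $m=(M-\mu)/\sigma$, $v=V/\sigma^2$ and $\Psi(a,b)=-e^{-a^2/(2(b+1))}\bigl(2a^2(b+1)^{-5/2}+(b-1)(b+1)^{-3/2}\bigr)$; in particular $\Psi(0,1)=0$, so $\mathbb{E}(\psi(Z))=0$. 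I would then split $\Psi=\Psi_1+\Psi_2$ into its $a^2$-proportional part $\Psi_1$ and its $(b-1)$-proportional part $\Psi_2$, put $T_1=\sigma^2\,\mathbb{E}[\Psi_1(m,v)]$ and $T_2=\sigma^2\,\mathbb{E}[\Psi_2(m,v)]$, so that $\mathbb{E}(\psi(W))-\mathbb{E}(\psi(Z))=\sigma^{-2}(T_1+T_2)$, and conclude by the reverse triangle inequality that $d\ge\sigma^{-2}C^{-1}|T_1+T_2|\ge\sigma^{-2}C^{-1}\bigl||T_1|-|T_2|\bigr|$.

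It then remains to match $T_1$ and $T_2$ with the claimed quantities. For $T_2$, the integrand is $\sigma^2\Psi_2(m,v)=(\sigma^2-V)\bigl(\sigma^2/(\sigma^2+V)\bigr)^{3/2}e^{-(M-\mu)^2/(2(\sigma^2+V))}$; since $\mathbb{E}(\sigma^2-V)=0$ and $\kappa(V)=(\sigma^2-V)\bigl((\sigma^2/(\sigma^2+V))^{3/2}-2^{-3/2}\bigr)$, the difference $T_2-\mathbb{E}(\kappa(V))$ is the expectation of $(\sigma^2-V)\bigl(\sigma^2/(\sigma^2+V)\bigr)^{3/2}\bigl(e^{-(M-\mu)^2/(2(\sigma^2+V))}-1\bigr)$, which by $1-e^{-t}\le t$ is bounded in absolute value by $\tfrac12\sigma^{-2}\mathbb{E}\bigl(|\sigma^2-V|(\mu-M)^2\bigr)=o(\mathbb{E}(\kappa(V)))$; since $\kappa\ge0$ pointwise (by cases on the sign of $\sigma^2-x$) this yields $|T_2|\sim\mathbb{E}(\kappa(V))$. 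For $T_1$, the integrand is $\sigma^2\Psi_1(m,v)=-2(M-\mu)^2\bigl(\tfrac{\sigma^2}{\sigma^2+V}\bigr)^{5/2}e^{-(M-\mu)^2/(2(\sigma^2+V))}$, whose weight lies in $(0,1]$, so $|T_1|\le 2\mathbb{V}(M)$ at once; for the matching lower bound $|T_1|\ge c\,\mathbb{V}(M)$ one restricts the expectation to the event $\{|m|\le\varepsilon,\,|v-1|\le\varepsilon\}$, where the weight is bounded below by a positive constant, and controls the complementary contribution $\mathbb{E}\bigl((M-\mu)^2 I_{\{|m|>\varepsilon\}\cup\{|v-1|>\varepsilon\}}\bigr)$ by Markov's inequality together with the three moment hypotheses $\sigma^{-2}\mathbb{E}((\mu-M)^4)=o(\mathbb{V}(M))$, $\sigma^{-2}\mathbb{E}((\sigma^2-V)_+(\mu-M)^2)=o(\mathbb{V}(M))$ and $\sigma^{-2}\mathbb{E}(|\sigma^2-V|(\mu-M)^2)=o(\mathbb{E}(\kappa(V)))$. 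I expect this localization step --- showing that $(M-\mu)^2$ contributes negligibly on the event where $V$ is atypical --- to be the main obstacle; granting it, $|T_1|\asymp\mathbb{V}(M)$ and the theorem follows. Finally, the ``moreover'' inequality is a pointwise estimate: with $a=\sigma^2/(\sigma^2+x)\in(0,1]$, a mean-value bound on $t\mapsto t^{3/2}$ together with $\sigma^2+x\ge\sigma^2$ gives $\kappa(x)=(\sigma^2-x)\bigl(a^{3/2}-2^{-3/2}\bigr)\le\tfrac{27}{8}\sigma^{-2}(\sigma^2-x)^2$ for all $x\ge0$, and taking expectations concludes.
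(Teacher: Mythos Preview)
Your approach is essentially the paper's: the same Barbour--Hall scheme with the same test function (the paper's $h(x)=\sigma^2(1-2x^2)e^{-x^2/2}$ is $\sigma^2$ times your $\psi$). The difference lies in how the numerator is split. The paper applies the conditional Stein identity to write it as $\mathbb{E}\bigl((\sigma^2-V)\,g'(X)\bigr)+\mathbb{E}\bigl((\mu-M)\,g(X)\bigr)$ with $g(y)=(y-\mu)e^{-(y-\mu)^2/(2\sigma^2)}$, and then evaluates $\mathbb{E}(g(X)\mid\mathscr{G})$ and $\mathbb{E}(g'(X)\mid\mathscr{G})$ separately; you instead compute $\mathbb{E}(\psi(W)\mid\mathscr{G})=\Psi(m,v)$ in one shot and split that algebraically. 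The resulting $T_1,T_2$ are not identical (your $T_1$ carries weight $(\sigma^2/(\sigma^2+V))^{5/2}$, the paper's has exponent $3/2$), but the structure is the same. Your $T_2$ argument, exploiting $\mathbb{E}(\sigma^2-V)=0$ to insert the constant $2^{-3/2}$ and produce $\kappa$, is in fact a bit slicker than the paper's, which reaches the same conclusion via a longer decomposition of $\mathbb{E}(g'(X)\mid\mathscr{G})-2^{-3/2}$ and the inequality $1\ge(1-2u)e^{-u}\ge 1-3u$.

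Where the paper's route pays off is exactly your acknowledged obstacle, the lower bound $|T_1|\ge c\,\mathbb{V}(M)$. Rather than localize and appeal to Markov, the paper uses the tangent line to the convex map $x\mapsto(\sigma^2/(\sigma^2+x))^{3/2}$ at $x=\sigma^2$, together with $e^{-u}\ge 1-u$, to obtain a \emph{pointwise} lower bound on the weight of the form $2^{-3/2}\bigl(1-c_1\sigma^{-2}(V-\sigma^2)_+-c_2\sigma^{-2}(\mu-M)^2\bigr)$. Multiplying by $(\mu-M)^2$ and integrating gives $|T_1|\ge 2^{-3/2}\mathbb{V}(M)$ minus exactly the two lower-order moments appearing in the hypotheses. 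This sidesteps the difficulty in your localization sketch: to control $\mathbb{E}\bigl[(M-\mu)^2 I_{\{V>(1+\varepsilon)\sigma^2\}}\bigr]$ by $o(\mathbb{V}(M))$ you would need a bound on $\sigma^{-2}\mathbb{E}\bigl((V-\sigma^2)_+(\mu-M)^2\bigr)$ relative to $\mathbb{V}(M)$, but the third hypothesis only controls $\sigma^{-2}\mathbb{E}\bigl(|\sigma^2-V|(\mu-M)^2\bigr)$ relative to $\mathbb{E}(\kappa(V))$, which need not be $O(\mathbb{V}(M))$. For the ``moreover'', the paper obtains $27/8=\tfrac12\kappa''(0)$ after checking that $\kappa''$ is decreasing on a sufficiently large interval; your mean-value argument is simpler and actually yields a smaller constant, so it certainly suffices.
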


\begin{proof} Inspired by the approach of Sections 3.2--3 in \cite{BarbourHolstJanson1992}, we define the function $g:\mathbb{R}\to\mathbb{R}$ as follows:
\[g(y) = (y-\mu)\exp\bigl(-\frac{(y-\mu)^2}{2\sigma^2}\bigr) \qquad\forall y\in\mathbb{R}.\]
It is easily seen that $g$ is bounded and has a bounded and continuous derivative. Define $h:\mathbb{R}\to\mathbb{R}$ by: $h(x) = \sigma^2g'(\sigma x + \mu) - \sigma xg(\sigma x + \mu)$ for each $x\in\mathbb{R}$. This gives:
\begin{equation} \label{E:lowerboundequation}
\mathbb{E}\bigl(\sigma^2g'(X) - (X-\mu)g(X)\bigr) = \mathbb{E}(h(\frac{X-\mu}{\sigma})) - \mathbb{E}(h(\frac{Z-\mu}{\sigma})),
\end{equation}
where $Z\sim\textnormal{N}(\mu,\sigma^2)$. By the Stein identity \eqref{E:generalnormalsteinidentity2}, the second term on the right hand side in \eqref{E:lowerboundequation} is 0, and using Fubini's theorem, the right hand side can be rewritten as:
\[\mathbb{E}(h(\frac{X-\mu}{\sigma})) - \mathbb{E}(h(\frac{Z-\mu}{\sigma})) = \int_{-\infty}^\infty h(\frac{x-\mu}{\sigma})dF_X(x) - \int_{-\infty}^\infty h(\frac{x-\mu}{\sigma})dF_Z(x)\]
\[= \int_{-\infty}^\infty \int_{-\infty}^x\frac{1}{\sigma}h'(\frac{y-\mu}{\sigma})dy dF_X(x) - \int_{-\infty}^\infty \int_{-\infty}^x\frac{1}{\sigma}h'(\frac{y-\mu}{\sigma})dy dF_Z(x)\]
\[= \int_{-\infty}^\infty \frac{1}{\sigma}h'(\frac{y-\mu}{\sigma})\mathbb{P}(X>y)dy  - \int_{-\infty}^\infty\frac{1}{\sigma}h'(\frac{y-\mu}{\sigma})\mathbb{P}(Z>y)dy,\]
implying that
\[\bigl|\mathbb{E}(h(\frac{X-\mu}{\sigma})) - \mathbb{E}(h(\frac{Z-\mu}{\sigma}))\bigr| \leq \int_{-\infty}^\infty \frac{1}{\sigma}\bigl|h'(\frac{y-\mu}{\sigma})\bigr|\,\bigl|\mathbb{P}(X>y)-\mathbb{P}(Z>y)\bigr|dy\]
\[\leq d_K\Bigl(\mathscr{L}\bigl(\frac{X-\mu}{\sigma}\bigr),\textnormal{N}(0,1))\Bigr)\int_{-\infty}^\infty \frac{1}{\sigma}\bigl|h'(\frac{y-\mu}{\sigma})\bigr|dy\]
\[= d_K\Bigl(\mathscr{L}\bigl(\frac{X-\mu}{\sigma}\bigr),\textnormal{N}(0,1))\Bigr)\int_{-\infty}^\infty|h'(x)|dx.\]
We therefore get the following lower bound for the Kolmogorov distance:
\begin{equation} \label{E:Kolmogorovfirstlowerbound}
d_K\Bigl(\mathscr{L}\bigl(\frac{X-\mu}{\sigma}\bigr),\textnormal{N}(0,1))\Bigr) \geq \frac{\bigl|\mathbb{E}(\sigma^2g'(X) - (X-\mu)g(X))\bigr|}{\int_{-\infty}^\infty|h'(x)|dx}.
\end{equation}
From the definition and \eqref{E:lowerboundequation}, we get a very similar lower bound for the Wasserstein distance:
\[d_W\Bigl(\mathscr{L}\bigl(\frac{X-\mu}{\sigma}\bigr),\textnormal{N}(0,1))\Bigr) \geq \frac{\bigl|\mathbb{E}(\sigma^2g'(X) - (X-\mu)g(X))\bigr|}{\max_{x\in\mathbb{R}}|2x^3-5x|e^{-x^2/2}}.\]
We next observe that $g'(y) = \bigl(1-\frac{(y-\mu)^2}{\sigma^2}\bigr)\exp(-\frac{(y-\mu)^2}{2\sigma^2})$, and
\[g''(y) = \bigl(\frac{(y-\mu)^3}{\sigma^4}-\frac{3(y-\mu)}{\sigma^2}\bigr)\exp(-\frac{(y-\mu)^2}{2\sigma^2}) \qquad\forall y\in\mathbb{R},\]
which in turn implies: $g(\sigma x +\mu) = \sigma xe^{-x^2/2}$, $g'(\sigma x +\mu) = (1-x^2)e^{-x^2/2}$, and $g''(\sigma x +\mu) = \sigma^{-1}(x^3-3x)e^{-x^2/2}$, for each $x\in\mathbb{R}$. From this we get: $h(x)= \bigl(\sigma^2(1-x^2) - \sigma^2 x^2\bigr)e^{-x^2/2} = \sigma^2(1-2x^2)e^{-x^2/2}$, and
\[h'(x) = \sigma^2\bigl(-4x -x +2x^3\bigr)e^{-x^2/2} = \sigma^2(2x^3-5x)e^{-x^2/2} \qquad\forall x\in\mathbb{R}.\]
It remains to find a lower bound for the numerator in \eqref{E:Kolmogorovfirstlowerbound}. Using \eqref{E:expectedconditionalsteinidentity2}, we first write:
\[\mathbb{E}\bigl(\sigma^2 g'(X) - (X-\mu) g(X)\bigr) = \mathbb{E}\bigl((\sigma^2-\mathbb{V}(X|\mathscr{G})) g'(X) + (\mu-\mathbb{E}(X|\mathscr{G})) g(X)\bigr)\]
\[= \mathbb{E}\bigl((\sigma^2-\mathbb{V}(X|\mathscr{G}))\mathbb{E}(g'(X)|\mathscr{G}) + (\mu-\mathbb{E}(X|\mathscr{G}))\mathbb{E}(g(X)|\mathscr{G})\bigr).\]
After some straightforward computations, we get:
\[\mathbb{E}(g(X)|\mathscr{G}) = \int_{-\infty}^\infty (y-\mu)\exp\bigl(-\frac{(y-\mu)^2}{2\sigma^2}\bigr)\frac{1}{\sqrt{2\pi\mathbb{V}(X|\mathscr{G})}}\exp\bigl(-\frac{(y-\mathbb{E}(X|\mathscr{G}))^2}{2\mathbb{V}(X|\mathscr{G})}\bigr)dy\]
\begin{equation} \label{E:lowerboundfirstterm}
= \Bigl(\frac{\sigma^2}{\sigma^2 +\mathbb{V}(X|\mathscr{G})}\Bigr)^{3/2}\exp\bigl(-\frac{(\mu-\mathbb{E}(X|\mathscr{G}))^2}{2(\sigma^2+\mathbb{V}(X|\mathscr{G}))}\bigr)(\mathbb{E}(X|\mathscr{G})-\mu),
\end{equation}
and multiplying \eqref{E:lowerboundfirstterm} by $\mu-\mathbb{E}(X|\mathscr{G})$, we obtain:
\[(\mu-\mathbb{E}(X|\mathscr{G}))\mathbb{E}(g(X)|\mathscr{G})\]
\[= -\Bigl(\frac{\sigma^2}{\sigma^2 +\mathbb{V}(X|\mathscr{G})}\Bigr)^{3/2}\exp\bigl(-\frac{(\mu-\mathbb{E}(X|\mathscr{G}))^2}{2(\sigma^2+\mathbb{V}(X|\mathscr{G}))}\bigr)(\mu-\mathbb{E}(X|\mathscr{G}))^2,\]
an expression which is \emph{nonpositive}. Furthermore, by convexity,
\begin{equation} \label{E:convexfactorinlowerbound}
\Bigl(\frac{\sigma^2}{\sigma^2 + x}\Bigr)^{3/2} \geq \frac{1}{2^{3/2}}\bigl(1 - \frac{3}{4\sigma^2}(x-\sigma^2)\bigr) \qquad\forall x>-\sigma^2,
\end{equation}
where the right hand side is the tangent line at $x=\sigma^2$. It follows that if $\mathbb{V}(X|\mathscr{G})\geq \sigma^2$, then (compare the proof of Lemma 3.2.1 in \cite{BarbourHolstJanson1992}):
\[1 \geq \frac{1}{2^{3/2}} \geq \Bigl(\frac{\sigma^2}{\sigma^2 +\mathbb{V}(X|\mathscr{G})}\Bigr)^{3/2}\exp\bigl(-\frac{(\mu-\mathbb{E}(X|\mathscr{G}))^2}{2(\sigma^2+\mathbb{V}(X|\mathscr{G}))}\bigr)\]
\[\geq \frac{1}{2^{3/2}}\bigl(1 - \frac{3(\mathbb{V}(X|\mathscr{G})-\sigma^2)_+}{4\sigma^2} - \frac{(\mu-\mathbb{E}(X|\mathscr{G}))^2}{4\sigma^2}\bigr),\]
while if $\mathbb{V}(X|\mathscr{G})\leq \sigma^2$, then:
\[1 \geq \Bigl(\frac{\sigma^2}{\sigma^2 +\mathbb{V}(X|\mathscr{G})}\Bigr)^{3/2}\exp\bigl(-\frac{(\mu-\mathbb{E}(X|\mathscr{G}))^2}{2(\sigma^2+\mathbb{V}(X|\mathscr{G}))}\bigr)\]
\[\geq \frac{1}{2^{3/2}}\Bigl(1 + \frac{3(\sigma^2-\mathbb{V}(X|\mathscr{G}))_+}{4\sigma^2} - \frac{(\mu-\mathbb{E}(X|\mathscr{G}))^2}{2\sigma^2}\]
\[- \frac{3(\sigma^2-\mathbb{V}(X|\mathscr{G}))_+(\mu-\mathbb{E}(X|\mathscr{G}))^2}{8\sigma^4}\Bigr) \geq \frac{1}{2^{3/2}}\Bigl(1 - \frac{7(\mu-\mathbb{E}(X|\mathscr{G}))^2}{8\sigma^2}\Bigr).\]
Multiplying by $\mu-\mathbb{E}(X|\mathscr{G})$ and taking expectations in the last two sets of inequalities, we get:
\begin{equation} \label{E:nonpositiveterminlowerbound}
\begin{split}
\mathbb{E}\bigl((\mu-\mathbb{E}&(X|\mathscr{G}))^2\bigr) \geq \bigl|\mathbb{E}\bigl((\mu-\mathbb{E}(X|\mathscr{G}))g(X)\bigr)\bigr|\\
\geq \frac{1}{2^{3/2}}\Bigl(\mathbb{E}&\bigl((\mu-\mathbb{E}(X|\mathscr{G}))^2\bigr) - \frac{7\mathbb{E}\bigl((\mu-\mathbb{E}(X|\mathscr{G}))^4\bigr)}{8\sigma^2}\\
-&\frac{3\mathbb{E}\bigl((\mathbb{V}(X|\mathscr{G}) - \sigma^2)_+(\mu-\mathbb{E}(X|\mathscr{G}))^2\bigr)}{4\sigma^2}\Bigr).
\end{split}
\end{equation}
From this it follows that if the asymptotic behaviour of $X$ is such that $\sigma^{-2}\mathbb{E}\bigl((\mu-\mathbb{E}(X|\mathscr{G}))^4\bigr)$ and $\sigma^{-2}\mathbb{E}\bigl((\sigma^2-\mathbb{V}(X|\mathscr{G}))_+(\mu-\mathbb{E}(X|\mathscr{G}))^2\bigr)$ converge to 0 faster than $\mathbb{E}\bigl((\mu-\mathbb{E}(X|\mathscr{G}))^2\bigr)$, it holds that $\bigl|\mathbb{E}\bigl((\mu-\mathbb{E}(X|\mathscr{G}))g(X)\bigr)\bigr| \asymp \mathbb{E}\bigl((\mu-\mathbb{E}(X|\mathscr{G}))^2\bigr)$.

Similarly, after some computations, we obtain:
\[\mathbb{E}(g'(X)|\mathscr{G}) = \Bigl(\frac{\sigma^2}{\sigma^2 +\mathbb{V}(X|\mathscr{G})}\Bigr)^{3/2}\Bigl(1 - \frac{(\mu-\mathbb{E}(X|\mathscr{G}))^2}{\sigma^2+\mathbb{V}(X|\mathscr{G})}\Bigr)\exp\bigl(-\frac{(\mu-\mathbb{E}(X|\mathscr{G}))^2}{2(\sigma^2+\mathbb{V}(X|\mathscr{G}))}\bigr),\]
and subtracting with $\mathbb{E}(g'(Z)) = \frac{1}{2^{3/2}}$ leads to
\[\mathbb{E}(g'(X)|\mathscr{G})- \frac{1}{2^{3/2}} = \Bigl(\Bigl(\frac{\sigma^2}{\sigma^2 +\mathbb{V}(X|\mathscr{G})}\Bigr)^{3/2} - \frac{1}{2^{3/2}}\Bigr)\]
\[\times\Bigl(1 - \frac{(\mu-\mathbb{E}(X|\mathscr{G}))^2}{\sigma^2+\mathbb{V}(X|\mathscr{G})}\Bigr)\exp\bigl(-\frac{(\mu-\mathbb{E}(X|\mathscr{G}))^2}{2(\sigma^2+\mathbb{V}(X|\mathscr{G}))}\bigr)\]
\[+ \frac{1}{2^{3/2}}\Bigl(\Bigl(1 - \frac{(\mu-\mathbb{E}(X|\mathscr{G}))^2}{\sigma^2+\mathbb{V}(X|\mathscr{G})}\Bigr)\exp\bigl(-\frac{(\mu-\mathbb{E}(X|\mathscr{G}))^2}{2(\sigma^2+\mathbb{V}(X|\mathscr{G}))}\bigr) - 1\Bigr).\]
Multiplying by $\sigma^2-\mathbb{V}(X|\mathscr{G})$ and using the function $\kappa$ defined in Theorem~\ref{T:Kolmogorovlowerboundmixednormal}, we get:
\begin{equation} \label{E:firstterminlowerbound}
\begin{split}
(\sigma^2&-\mathbb{V}(X|\mathscr{G}))\Bigl(\mathbb{E}(g'(X)|\mathscr{G})- \frac{1}{2^{3/2}}\Bigr)\\
= \kappa(\mathbb{V}(X|\mathscr{G}))\Bigl(&1 - \frac{(\mu-\mathbb{E}(X|\mathscr{G}))^2}{\sigma^2+\mathbb{V}(X|\mathscr{G})}\Bigr)\exp\bigl(-\frac{(\mu-\mathbb{E}(X|\mathscr{G}))^2}{2(\sigma^2+\mathbb{V}(X|\mathscr{G}))}\bigr)\\
+ \frac{1}{2^{3/2}}(\sigma^2-\mathbb{V}(X|\mathscr{G}))&\Bigl(\Bigl(1 - \frac{(\mu-\mathbb{E}(X|\mathscr{G}))^2}{\sigma^2+\mathbb{V}(X|\mathscr{G})}\Bigr)\exp\bigl(-\frac{(\mu-\mathbb{E}(X|\mathscr{G}))^2}{2(\sigma^2+\mathbb{V}(X|\mathscr{G}))}\bigr) - 1\Bigr).
\end{split}
\end{equation}
We observe that $\kappa(\sigma^2) = 0$, and that
\[\kappa'(x) = -\Bigl(\Bigl(\frac{\sigma^2}{\sigma^2 + x}\Bigr)^{3/2} - \frac{1}{2^{3/2}}\Bigr) - (\sigma^2-x)\frac{3}{2\sigma^2}\Bigl(\frac{\sigma^2}{\sigma^2 + x}\Bigr)^{5/2} \qquad\forall x> 0,\]
so $\kappa'(x)<0$ for $x\in(0,\sigma^2)$, $\kappa'(\sigma^2)=0$, and $\kappa'(x)>0$ for $x>\sigma^2$. Moreover, by \eqref{E:convexfactorinlowerbound},
\[\kappa(x) \geq \frac{1}{2^{3/2}}\frac{3}{4\sigma^2}(\sigma^2 - x)^2 \qquad\forall x\leq \sigma^2,\]
and $\kappa'(x)\to\frac{1}{2^{3/2}}$ as $x\to\infty$. Next,
\[\kappa''(x) = \frac{3}{\sigma^2}\Bigl(\frac{\sigma^2}{\sigma^2 + x}\Bigr)^{5/2} + (\sigma^2-x)\frac{15}{4\sigma^4}\Bigl(\frac{\sigma^2}{\sigma^2 + x}\Bigr)^{7/2}\]
\[= \frac{3}{\sigma^2}\Bigl(\frac{\sigma^2}{\sigma^2 + x}\Bigr)^{5/2}\Bigl(1 + \frac{5}{4}\Bigl(\frac{\sigma^2 - x}{\sigma^2 + x}\Bigr)\Bigr) = \frac{15}{2\sigma^2}\Bigl(\frac{\sigma^2}{\sigma^2 + x}\Bigr)^{5/2}\Bigl(\frac{\sigma^2}{\sigma^2 + x} - \frac{1}{10}\Bigr)\qquad\forall x> 0,\]
and
\[\kappa'''(x) = -\frac{45}{4\sigma^4}\Bigl(\frac{\sigma^2}{\sigma^2 + x}\Bigr)^{7/2} - (\sigma^2-x)\frac{105}{8\sigma^6}\Bigl(\frac{\sigma^2}{\sigma^2 + x}\Bigr)^{9/2}\]
\[= -\frac{15}{4\sigma^4}\Bigl(\frac{\sigma^2}{\sigma^2 + x}\Bigr)^{7/2}\Bigl(3 + \frac{7}{2}\Bigl(\frac{\sigma^2 - x}{\sigma^2 + x}\Bigr)\Bigr) = -\frac{105}{4\sigma^4}\Bigl(\frac{\sigma^2}{\sigma^2 + x}\Bigr)^{7/2}\Bigl(\frac{\sigma^2}{\sigma^2 + x} - \frac{1}{14}\Bigr)\qquad\forall x> 0,\]
implying that $\kappa''(x)>0$ for $x\in(0,9\sigma^2)$, $\kappa''(9\sigma^2)=0$, and $\kappa''(x)<0$ for $x>9\sigma^2$. Moreover, $\kappa''(x)$ is strictly decreasing for $x\in[0,13\sigma^2)$. This means that for $\delta>0$ small enough, for any $x_0\in(9\sigma^2-\delta,9\sigma^2)$, it holds that $\kappa''(x_0)>0$ and $\kappa'(x_0)>\frac{1}{2^{3/2}}$. It therefore holds that $\kappa(x) \geq \frac{1}{2}\kappa''(x_0)(\sigma^2-x)^2$ for $x\in[0,x_0]$, and $\kappa(x) \geq \frac{1}{2}\kappa''(x_0)(\sigma^2-x_0)^2 + \frac{1}{2^{3/2}}(x-x_0)$ for $x\geq x_0$. It also follows from the preceding that $\kappa(x) \leq \frac{1}{2}\kappa''(0)(\sigma^2-x)^2 = \frac{27}{8\sigma^2}(\sigma^2-x)^2$ for $x\geq 0$.

Using now the fact, observed in Section 3.2 in \cite{BarbourHolstJanson1992}, that $1\geq (1-2u)e^{-u} \geq 1-3u$ for all $u\geq 0$, we obtain:
\[\kappa(\mathbb{V}(X|\mathscr{G})) \geq \kappa(\mathbb{V}(X|\mathscr{G}))\Bigl(1 - \frac{(\mu-\mathbb{E}(X|\mathscr{G}))^2}{\sigma^2+\mathbb{V}(X|\mathscr{G})}\Bigr)\exp\bigl(-\frac{(\mu-\mathbb{E}(X|\mathscr{G}))^2}{2(\sigma^2+\mathbb{V}(X|\mathscr{G}))}\bigr)\]
\[\geq \kappa(\mathbb{V}(X|\mathscr{G}))\Bigl(1 - \frac{3(\mu-\mathbb{E}(X|\mathscr{G}))^2}{2\sigma^2}\Bigr),\]
and, furthermore,
\[(\sigma^2-\mathbb{V}(X|\mathscr{G}))_+\Bigl|\Bigl(1 - \frac{(\mu-\mathbb{E}(X|\mathscr{G}))^2}{\sigma^2+\mathbb{V}(X|\mathscr{G})}\Bigr)\exp\bigl(-\frac{(\mu-\mathbb{E}(X|\mathscr{G}))^2}{2(\sigma^2+\mathbb{V}(X|\mathscr{G}))}\bigr) - 1\Bigr|\]
\[\leq \frac{3(\sigma^2-\mathbb{V}(X|\mathscr{G}))_+(\mu-\mathbb{E}(X|\mathscr{G}))^2}{2\sigma^2}.\]
Furthermore,
\[(\mathbb{V}(X|\mathscr{G})-\sigma^2)_+\Bigl|\Bigl(1 - \frac{(\mu-\mathbb{E}(X|\mathscr{G}))^2}{\sigma^2+\mathbb{V}(X|\mathscr{G})}\Bigr)\exp\bigl(-\frac{(\mu-\mathbb{E}(X|\mathscr{G}))^2}{2(\sigma^2+\mathbb{V}(X|\mathscr{G}))}\bigr) - 1\Bigr|\]
\[\leq \frac{3(\mathbb{V}(X|\mathscr{G})-\sigma^2)_+(\mu-\mathbb{E}(X|\mathscr{G}))^2}{2\sigma^2}.\]
Taking expectations in \eqref{E:firstterminlowerbound} and using the last three sets of inequalities, we get:
\[\mathbb{E}\bigl(\kappa(\mathbb{V}(X|\mathscr{G}))\bigr) + \frac{3\mathbb{E}\bigl((\sigma^2-\mathbb{V}(X|\mathscr{G}))_+(\mu-\mathbb{E}(X|\mathscr{G}))^2\bigr)}{2^{5/2}\sigma^2}\geq \mathbb{E}\bigl((\sigma^2-\mathbb{V}(X|\mathscr{G}))\mathbb{E}(g'(X)|\mathscr{G})\bigr)\]
\[\geq \mathbb{E}\bigl(\kappa(\mathbb{V}(X|\mathscr{G}))\bigr) - \frac{3\mathbb{E}\bigl(\kappa(\mathbb{V}(X|\mathscr{G}))(\mu-\mathbb{E}(X|\mathscr{G}))^2\bigr)}{2\sigma^2}\]
\[- \frac{3\mathbb{E}\bigl((\mathbb{V}(X|\mathscr{G})- \sigma^2)_+(\mu-\mathbb{E}(X|\mathscr{G}))^2\bigr)}{2^{5/2}\sigma^2}.\]
From this it follows that if the asymptotic behaviour of $X$ is such that $\sigma^{-2}\mathbb{E}\bigl(|\sigma^2-\mathbb{V}(X|\mathscr{G})|(\mu-\mathbb{E}(X|\mathscr{G}))^2\bigr)$ converges to 0 faster than $\mathbb{E}(\kappa(\mathbb{V}(X|\mathscr{G})))$ (note also that $\kappa(\mathbb{V}(X|\mathscr{G})) \leq |\sigma^2-\mathbb{V}(X|\mathscr{G})|)$, it holds that
\[\bigl|\mathbb{E}\bigl((\sigma^2-\mathbb{V}(X|\mathscr{G}))\mathbb{E}(g'(X)|\mathscr{G})\bigr)\bigr|\sim \mathbb{E}\bigl(\kappa(\mathbb{V}(X|\mathscr{G}))\bigr).\]
\end{proof}

\section*{Acknowledgements}
We wish to thank an anonymous referee for a number of insightful comments.
\\ \noindent
KB is supported by the Swedish Research Council's (Vetenskapsr\aa det) grant no.~$2017$-$04951$.

\end{document}